\newtheorem{Thm}{Theorem}[section]
\newtheorem{Lemme}[Thm]{Lemme}
\newtheorem{Prop}[Thm]{Proposition}
\newtheorem{remark}[Thm]{Remark}
\newenvironment{changemargin}[2]{\begin{list}{}{%
\setlength{\topsep}{0pt}%
\setlength{\leftmargin}{0pt}%
\setlength{\rightmargin}{0pt}%
\setlength{\listparindent}{\parindent}%
\setlength{\itemindent}{\parindent}%
\setlength{\parsep}{0pt plus 1pt}%
\addtolength{\leftmargin}{#1}%
\addtolength{\rightmargin}{#2}%
}\item }{\end{list}}
\newcommand\R{\mathbb{R}}
\newcommand\T{\mathbb{T}}
\newcommand\N{\mathbb{N}}
\newcommand\Z{\mathbb{Z}}
\newcommand\Ng[1]{\left| #1 \right|_{\gamma}}
\newcommand\Hg{H_{\gamma}(\T)}
\newcommand\Nb[1]{\left| #1 \right|_{\beta}}
\newcommand\Hb{H_{\beta}(\T)}
\title{Damping operators for the BBM equation}
\author{Pierre GARNIER}
\date{2013}
\begin{document}

\title{\Large {\bf Long-time behavior of solutions of a BBM equation with
    generalized damping} 
} 
 
\author{ {\bf J.-P. Chehab, P. Garnier$^*$, Y. Mammeri}\\ 
{\small Laboratoire Ami\'enois de Math\'ematique Fondamentale et Appliqu\'ee,} \\  {\small CNRS UMR 7352, Universit\'e 
de Picardie Jules Verne,}\\
{\small 80069 Amiens, France.}\\
{\small $^*$Corresponding author: pierre.garnier@u-picardie.fr}
}

\date{ }
\maketitle

\begin{minipage}[t]{11.5cm}
	{\footnotesize {\bf Abstract.} We study the long-time behavior of the solution of a damped BBM equation $u_t + u_x - u_{xxt} + uu_x + \mathscr{L}_{\gamma}(u) = 0$. The proposed dampings $\mathscr{L}_{\gamma}$ generalize standards ones, as parabolic ($\mathscr{L}_{\gamma}(u)=-\Delta u$) or weak damping ($\mathscr{L}_{\gamma}(u)=\gamma u$) and allows us to consider a greater range. After establish the local well-posedness in the energy space, we investigate some numerical properties.} 
\end{minipage} \\ 
\\      
     
\begin{minipage}[t]{10.5cm}{\footnotesize {\bf Keywords.} 
BBM equation, dispersion, dissipation.}
\end{minipage}\\

{\footnotesize {\bf MS Codes.} 35B40, 35Q53, 76B03, 76B15.}

\section{Introduction}
The  one-way propagation of small amplitude, long wavelength, gravity
waves in shallow water can be described by the  Korteweg-de Vries equation
(KdV) \cite{KdV} 
$$
u_t + u_x + u_{xxx} + u u_x = 0.
$$
or its regularized version, the Benjamin-Bona-Mahony (BBM) equation 
$$
u_t + u_x - u_{xxt} + u u_x = 0.
$$

Many works related to the damped KdV
equation can be found in the literature (\cite{AmiBonSch, CabRos,
  CheSad1, CheSad2, Ghi1,
  Ghi2, Gou,
  GouRos, OttSud, Ven1, Ven2, Tem} and references therein). These account for a wide variety of different results, such as regularizing effect of the damping, asymptotic behavior, existence of attractor or numerical computations. Few literatures are
concerned the damped BBM equation \cite{HayKaiNau, Wan}. This paper  consists in 
establishing  theoretical and numerical results for the solution of a
generalized damped BBM equation.

We introduce the following equation, called dBBM, for $x \in \T(0,L)$, $L>0$ and 
$t\in \R$
$$
u_t + u_x - u_{xxt} + u u_x + \mathscr{L}_\gamma (u) = 0,
$$
where the operator $\mathscr{L}_\gamma$ is defined by its Fourier
symbol  
$$
\widehat{\mathscr{L}_\gamma (u )} (k) := \gamma_k \hat u_k.
$$
Here $\hat u_k$ is the $k-$th Fourier coefficient of $u$ and
$(\gamma_k)_{k\in \Z}$ are positive real numbers
chosen such that
$$
\int_\T  u(x) \mathscr{L}_\gamma (u) d\mu(x) = \sum_{k\in \Z} \gamma_k
|\hat u_k|^2 \geq 0.
$$

Standard dampings are included choosing $\gamma_k = k^2$ as parabolic
damping or $\gamma_k = \gamma$ as weak damping. The proposed sequence
$(\gamma_k)_{k\in \Z}$  allows us to consider a
greater range of damping. In particular, one may wonder
if it is necessary to absorb all frequencies of a long wavelength gravity
wave like the solution of KdV or BBM. To obtain this kind of results, we can consider sequences $\gamma_k \rightarrow 0$ when $|k| \rightarrow +\infty$ or even $\gamma_k = 0$ for large $|k|$.

In comparison with the standard BBM equation for which the $H^1-$norm
is preserved, we notice that the $H^1$-norm decreases for the solution $u$ of the damped
equation dBBM. More precisely, we have for all time $t\in \R$ 
$$
\frac{d}{dt} ||u(t)||_{H^1}^2 = - |u(t)|_{\gamma}^2 \leq 0,
$$
and the natural space to study the well-posedness is the space
$H_\gamma(\T)$, defined as
$$
H_\gamma(\T) := \left\{u\in L^2(\T); \sum_{k\in \Z} \gamma_k 
|\hat u_k|^2 < +\infty \right\},
$$
equipped with the norm
$$
|u|_\gamma := \sqrt{ \sum_{k\in \Z} \gamma_k
|\hat u_k|^2}.
$$
In order to simplify the writings, $k$ can denote either an integer as an index, or the value $\frac{2\pi k}{L}$.
When there is no ambiguity, we denote $C$ the different constants appearing in the following results. 

The paper is organised as follows. We establish in Section 2 the
local well-posedness of the dBBM equation. We prove some important
estimates about the space $H_\gamma(\T)$. Some qualitative properties
of the solution are also given. In Section 3, numerical schemes are
presented to solve the damped equation and to preserve the qualitative
properties of the continuous solution. Section 4 deals with the
numerical results. In particular, we manage to build a family of dampings
weaker than the standard ones. These, for example
taking $ \lim_{| k | \to +\infty} \gamma_k = $ 0, yet provide
dampen solutions.

\section{Analysis of the problem}
	After studying the space $\Hg$, we establish the well-posedness of the Cauchy problem associated with the dBBM equation.	
	
	\subsection{Proper energy space $\Hg$}
	Here we take $\gamma_k>0, \ \forall k \in \Z$. We first state some properties of injection.

\begin{Prop}
	Assume that $\sum\limits_{k \in \Z} \frac{1}{\gamma_k} < +\infty$. Then there exists a constant $C>0$ such that 
	\[ \| u \|_{\infty} \leq  C |u|_{\gamma}. \]
\end{Prop}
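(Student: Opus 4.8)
The plan is to pass to the Fourier side and reduce everything to the Cauchy--Schwarz inequality in $\ell^2(\Z)$. Writing $u(x) = \sum_{k\in\Z} \hat u_k e^{ikx}$ (up to the usual normalization constant hidden in the definition of the $\hat u_k$), I would first bound, for every $x\in\T$,
\[
|u(x)| \le \sum_{k\in\Z} |\hat u_k| = \sum_{k\in\Z} \frac{1}{\sqrt{\gamma_k}}\cdot \sqrt{\gamma_k}\,|\hat u_k|,
\]
which is meaningful precisely because all the $\gamma_k$ are strictly positive. Then Cauchy--Schwarz gives
\[
\sum_{k\in\Z} |\hat u_k| \le \Big(\sum_{k\in\Z}\frac{1}{\gamma_k}\Big)^{1/2}\Big(\sum_{k\in\Z}\gamma_k|\hat u_k|^2\Big)^{1/2} = C\,|u|_\gamma,
\]
with $C := \big(\sum_{k\in\Z}\gamma_k^{-1}\big)^{1/2} < +\infty$ by hypothesis, and taking the supremum over $x$ closes the argument. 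The constant is manifestly independent of $u$.

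One point deserving a sentence of care is the meaning of $\|u\|_\infty$: a priori $u$ is only an $L^2$ function. But the computation above already shows that $\sum_{k}|\hat u_k| < +\infty$ as soon as $|u|_\gamma < +\infty$, so the Fourier series of any $u\in\Hg$ converges absolutely and uniformly; its sum is a continuous function coinciding with $u$ almost everywhere, and the inequality holds for that continuous representative. I would phrase the conclusion as the continuous embedding $\Hg \hookrightarrow C(\T) \subset L^\infty(\T)$ so there is no ambiguity.

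I do not expect any real obstacle here: the only things to watch are that the hypothesis $\sum_k \gamma_k^{-1}<+\infty$ legitimately includes the $k=0$ term (harmless since $\gamma_0>0$), and that the splitting $|\hat u_k| = \gamma_k^{-1/2}\cdot\gamma_k^{1/2}|\hat u_k|$ is valid termwise. If desired, one can also remark that the bound is essentially sharp — choosing $\hat u_k$ proportional to $\gamma_k^{-1}$ makes Cauchy--Schwarz an equality and forces the same constant $C$ — but this is not needed for the statement.
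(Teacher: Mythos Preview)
Your proof is correct and follows exactly the same line as the paper's own argument: expand $u$ in its Fourier series, bound $|u(x)|$ by $\sum_k |\hat u_k|$, split each term as $\gamma_k^{-1/2}\cdot\gamma_k^{1/2}|\hat u_k|$, and apply Cauchy--Schwarz. Your additional remarks on the continuous representative and on sharpness are fine embellishments but not needed for the statement.
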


The injection $H_{\gamma}(\T) \hookrightarrow  L^{\infty}(\T)$ is continuous.

\begin{proof}
	Let $u \in H_{\gamma}(\T)$. We notice that 
	\[ u(x)=\sum\limits_{k \in \Z} \hat{u}(k) e^{ik x}. \]
	Then
	\[ |u(x)| \leq \sum\limits_{k \in \Z} |\hat{u}(k)| =  \sum\limits_{k \in \Z} \frac{1}{\sqrt{\gamma_k}} \sqrt{\gamma_k} |\hat{u}(k)|.\]
	We assumed that $\gamma_k > 0$. Hence, the Cauchy-Schwarz inequality implies for all $x \in \T$ :
	\[ |u(x)| \leq \left( \sum\limits_{k \in \Z} \frac{1}{\gamma_k}\right)^{\frac{1}{2}} \left( \sum\limits_{k \in \Z} \gamma_k |\hat{u}(k)|^2 \right)^{\frac{1}{2}} = \left( \sum\limits_{k \in \Z} \frac{1}{\gamma_k}\right)^{\frac{1}{2}} \Ng{u}. \]
	This completes the proof.
\end{proof}

\begin{remark}
The above result is also a condition to have $\| \hat{u} \|_{l^1} \leq C |u|_{\gamma}$.
\end{remark}

\begin{Prop}
	We assume that for all $k \in \Z$ we have $\gamma_k > \beta_k$. Let $\rho_N = \max\limits_{k \geq N} \frac{\beta_k}{\gamma_k}$. Then $\lim\limits_{N \rightarrow +\infty} \rho_N = 0$ if and only if the continuous injection $\Hg \hookrightarrow  \Hb$ is compact.
\end{Prop}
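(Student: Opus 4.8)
The statement is an equivalence, $\rho_N\to 0 \iff \Hg\hookrightarrow\Hb \text{ compact}$, and I would prove the two implications separately. For the direction $\rho_N\to 0 \Rightarrow$ compactness, the plan is the classical weighted-$\ell^2$ argument. Start from a bounded sequence $(u^{(n)})_n$ in $\Hg$, $\Ng{u^{(n)}}\le M$. Since $\gamma_k|\hat u^{(n)}_k|^2\le M^2$ for each $k$, every coordinate $(\hat u^{(n)}_k)_n$ is bounded in $\C$, so a Cantor diagonal extraction yields a subsequence (kept as $(u^{(n)})_n$) with $\hat u^{(n)}_k\to v_k$ for all $k\in\Z$. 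The key is then the uniform tail bound: using $\gamma_k>\beta_k$ and the definition of $\rho_N$, for any $N$,
\[
\sum_{|k|\ge N}\beta_k\,|\hat u^{(n)}_k-\hat u^{(m)}_k|^2 \ \le\ \rho_N\sum_{|k|\ge N}\gamma_k\,|\hat u^{(n)}_k-\hat u^{(m)}_k|^2 \ \le\ 4M^2\rho_N .
\]
Splitting $\Nb{u^{(n)}-u^{(m)}}^2$ into this tail plus the finitely many frequencies $|k|<N$, first choosing $N$ so that $4M^2\rho_N<\varepsilon/2$ and afterwards letting $n,m\to\infty$ (each of the finitely many low-frequency terms vanishes because $(\hat u^{(n)}_k)_n$ converges), shows $(u^{(n)})_n$ is Cauchy in $\Hb$; a Fatou argument identifies its limit as $u=\sum_k v_k e^{ikx}\in\Hb$. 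Equivalently — and perhaps more cleanly — one observes that the truncations $P_Nu:=\sum_{|k|<N}\hat u_k e^{ikx}$ define finite-rank maps with $\|\,\iota-P_N\,\|_{\Hg\to\Hb}\le\sqrt{\rho_N}\to 0$, so the inclusion $\iota$ is a norm-limit of finite-rank operators.

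For the converse, compactness $\Rightarrow\rho_N\to 0$, I argue by contraposition. Since $N\mapsto\rho_N$ is non-increasing (the maximum runs over a shrinking index set), $\rho_N\not\to 0$ forces $\rho_N\ge\rho:=\lim_N\rho_N>0$ for every $N$. Hence for each $N$ one can pick an index $k_N$ with $|k_N|\ge N$ and $\beta_{k_N}/\gamma_{k_N}\ge\rho/2$; passing to a subsequence we may assume $|k_N|$ strictly increasing, so the $k_N$ are pairwise distinct. The normalized exponentials $e^{(N)}(x):=\gamma_{k_N}^{-1/2}e^{ik_Nx}$ then satisfy $\Ng{e^{(N)}}^2=1$, so $(e^{(N)})_N$ is bounded in $\Hg$, while for $N\ne M$
\[
\Nb{e^{(N)}-e^{(M)}}^2=\frac{\beta_{k_N}}{\gamma_{k_N}}+\frac{\beta_{k_M}}{\gamma_{k_M}}\ \ge\ \rho\ >\ 0 .
\]
Thus $(e^{(N)})_N$ is a bounded sequence in $\Hg$ with no Cauchy, hence no convergent, subsequence in $\Hb$, contradicting compactness of $\Hg\hookrightarrow\Hb$.

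The one genuinely delicate point sits in the first implication: one must exhibit an actual limit in $\Hb$, not merely a Cauchy sequence. Because the paper explicitly allows $\gamma_k\to 0$ (and then possibly $\beta_k\to 0$), the norm $\Ng{\cdot}$ may be strictly weaker than the $L^2$-norm, so $\Hb$ need not obviously be complete; the limit candidate $u=\sum_k v_k e^{ikx}$ must therefore be shown to lie in $\Hb\subset L^2(\T)$ by applying Fatou's lemma to the tail estimate (or, in the operator formulation, by noting that uniform limits of finite-rank maps have totally bounded image). Once this is pinned down, everything else is routine bookkeeping: the high-frequency control is automatic from $\rho_N$, and the low-frequency part is a finite sum.
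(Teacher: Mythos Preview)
Your argument is correct. For the implication $\rho_N\to 0\Rightarrow$ compactness, your ``cleaner'' alternative---writing the inclusion as a norm-limit of the finite-rank truncations $P_N$ via $\|\iota-P_N\|_{\Hg\to\Hb}\le\sqrt{\rho_N}$---is precisely the paper's proof; the paper does not pause over the completeness issue you flag.

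For the converse you are actually more careful than the paper. The paper disposes of this direction in one line: if $\rho_N>\alpha$ for all $N$ then the norms $\Nb{\cdot}$ and $\Ng{\cdot}$ are equivalent, and the identity on an infinite-dimensional space cannot be compact. That inference is too quick as stated: $\rho_N>\alpha$ only guarantees that \emph{some} index $k\ge N$ has $\beta_k/\gamma_k>\alpha$, not that all of them do, so the two norms need not be equivalent (take for instance $\gamma_k\equiv 1$, $\beta_k=\tfrac12$ for even $k$ and $\beta_k=(1+k^2)^{-1}$ for odd $k$; then $\rho_N\equiv\tfrac12$ yet $\inf_k\beta_k/\gamma_k=0$). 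Your contrapositive sidesteps this by extracting only the offending frequencies $k_N$ and testing compactness on the normalized exponentials $\gamma_{k_N}^{-1/2}e^{ik_Nx}$, producing a bounded sequence in $\Hg$ that is $\sqrt{\rho}$-separated in $\Hb$. The paper's one-liner carries the right intuition---a persistent lower bound on the ratio forces the embedding to behave like the identity on an infinite-dimensional piece---but your version is the argument that goes through without any additional hypothesis on the full sequence $(\beta_k/\gamma_k)_k$.
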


\begin{proof}
	The condition is necessary, indeed if there exists $\alpha>0$ such that $\rho_N > \alpha, \ \forall N$, then the norms $\Nb{u}$ and $\Ng{u}$ are equivalent, the injection cannot be compact.
	Let us prove now that the condition is sufficient.	
	First we have for $u \in \Hg$ :
	\[ \Nb{u} = \sum\limits_{k \in \Z} \beta_k |\hat{u}_k|^2 \leq \sum\limits_{k \in \Z} \gamma_k |\hat{u}_k|^2 = \Ng{u}.\]
	This shows that the injection is continuous. Now we prove that the injection is compact. We use finite rank operators and we take the limit. Let $I_N$ be the orthogonal operator on the polynomials of frequencies $k$ such that $-N \leq k \leq N$. We have
	\[ I_N u = \sum\limits_{|k|\leq N} \hat{u}_k e^{i kx}. \]
	Hence
	\begin{align*}
		\Nb{(Id-I_N)u}^2 & = \sum\limits_{|k| \geq N+1} \beta_k |\hat{u}_k|^2, \\
			 & \leq \sum\limits_{|k| \geq N+1} \frac{\beta_k}{\gamma_k} \gamma_k |\hat{u}_k|^2, \\
			 & \leq \rho_{N+1} \Ng{u}^2 \underset{N \rightarrow + \infty}{\longrightarrow} 0.
	\end{align*}
	Therefore $Id$ is a compact operator and consequently the injection is compact.		 
\end{proof}

Now we give some conditions on the sequence $(\gamma_k)_{k \in \Z}$ so that $\Hg$ is an algebra.
\begin{Prop}
	Let $u$ and $v$ be two functions in $H_{\gamma}(\T)$. Assume that there exists a constant $C>0$ such that $\sqrt{\gamma_k} \leq C \left( \sqrt{\gamma_{k-j}} + \sqrt{\gamma_j} \right)$ for all $k,\ j \in \Z$. Then we have the following inequality:
	\[ |uv|_{\gamma} \leq C \left( |u|_{\gamma} \| \hat{v} \|_{l^1} + |v|_{\gamma} \| \hat{u} \|_{l^1}  \right), \]
	Moreover if $\sum\limits_{k \in \Z} \frac{1}{\gamma_k} < +\infty$ then $H_{\gamma}(\T)$ is an algebra.
\end{Prop}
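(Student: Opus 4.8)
The plan is to estimate the Fourier coefficients of the product $uv$ directly. Recall that $\widehat{(uv)}_k = \sum_{j\in\Z} \hat u_{k-j}\,\hat v_j$, so that
\[
\sqrt{\gamma_k}\,|\widehat{(uv)}_k| \leq \sum_{j\in\Z} \sqrt{\gamma_k}\,|\hat u_{k-j}|\,|\hat v_j|.
\]
The first step is to insert the hypothesis $\sqrt{\gamma_k} \leq C\big(\sqrt{\gamma_{k-j}} + \sqrt{\gamma_j}\big)$, which splits the right-hand side into two pieces,
\[
\sqrt{\gamma_k}\,|\widehat{(uv)}_k| \leq C\sum_{j\in\Z}\sqrt{\gamma_{k-j}}\,|\hat u_{k-j}|\,|\hat v_j| + C\sum_{j\in\Z}|\hat u_{k-j}|\,\sqrt{\gamma_j}\,|\hat v_j|.
\]
Each piece is a convolution: the first is $\big(\sqrt{\gamma_\cdot}|\hat u_\cdot|\big) * |\hat v_\cdot|$ and the second is $|\hat u_\cdot| * \big(\sqrt{\gamma_\cdot}|\hat v_\cdot|\big)$, both evaluated at $k$.

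The second step is to take the $\ell^2$ norm over $k$ and apply Young's convolution inequality in the form $\|a*b\|_{\ell^2} \leq \|a\|_{\ell^2}\|b\|_{\ell^1}$. Applied to the first convolution this gives $\big\|\sqrt{\gamma_\cdot}|\hat u_\cdot|\big\|_{\ell^2}\,\big\||\hat v_\cdot|\big\|_{\ell^1} = |u|_\gamma\,\|\hat v\|_{\ell^1}$, and symmetrically the second gives $|v|_\gamma\,\|\hat u\|_{\ell^1}$. Combining via Minkowski yields
\[
|uv|_\gamma = \Big\|\sqrt{\gamma_\cdot}\,|\widehat{(uv)}_\cdot|\Big\|_{\ell^2} \leq C\big(|u|_\gamma\|\hat v\|_{\ell^1} + |v|_\gamma\|\hat u\|_{\ell^1}\big),
\]
which is the claimed inequality.

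For the algebra statement, the third step invokes the remark following the first Proposition: when $\sum_k 1/\gamma_k < +\infty$ we have $\|\hat u\|_{\ell^1} \leq C|u|_\gamma$ for every $u\in H_\gamma(\T)$. Substituting this bound for both $\|\hat v\|_{\ell^1}$ and $\|\hat u\|_{\ell^1}$ into the product inequality gives $|uv|_\gamma \leq C|u|_\gamma|v|_\gamma$, so $H_\gamma(\T)$ is stable under multiplication with a continuous product; since it is already a Banach space (being a weighted $\ell^2$ space in Fourier variables), it is a Banach algebra.

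The only delicate point is bookkeeping rather than mathematics: one must make sure the two convolution pieces are each genuinely of the form (weighted sequence) $*$ (summable sequence) so that Young's inequality applies with the norms landing on the correct factors, and one must check that the triangle inequality in $\ell^2$ is legitimately used on the level of the full sums (which is fine, as everything is nonnegative and the finiteness of the right-hand side is guaranteed by $u,v\in H_\gamma(\T)$ together with $\hat u,\hat v\in\ell^1$). I do not anticipate any real obstacle beyond this.
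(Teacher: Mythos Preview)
Your proposal is correct and follows essentially the same route as the paper: split $\sqrt{\gamma_k}$ via the hypothesis, recognize the two pieces as discrete convolutions, and apply Young's inequality $\|a*b\|_{\ell^2}\le\|a\|_{\ell^2}\|b\|_{\ell^1}$, then invoke the earlier remark that $\sum_k 1/\gamma_k<\infty$ gives $\|\hat u\|_{\ell^1}\lesssim|u|_\gamma$ for the algebra conclusion. The only cosmetic difference is that the paper squares first and uses $(a+b)^2\le 2(a^2+b^2)$ before summing in $k$, whereas you apply Minkowski directly in $\ell^2$; the two are equivalent here.
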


\begin{proof}
	Let $u, \, v \in H_{\gamma}(\T)$. We have
	\[ |uv|_{\gamma}^2 = \sum\limits_{k \in \Z} \gamma_k |\widehat{uv}(k)|^2. \]
	We remind that $\widehat{uv}(k) = \hat{u} \star \hat{v} (k)$. We use the inequality 
	\[ \sqrt{\gamma_k} \leq C \left( \sqrt{\gamma_{k-j}} + \sqrt{\gamma_j} \right).\]
	We obtain for all $k \in \Z$
	\[ \sqrt{\gamma_k} |\widehat{uv}(k)| \leq C \left( \sum\limits_{j \in \Z} \sqrt{\gamma_{k-j}} |\hat{u}(k-j) \hat{v}(j)| + \sum\limits_{j \in \Z} \sqrt{\gamma_j} |\hat{u}(k-j) \hat{v}(j)| \right). \]
	Hence
	\begin{align*}
		|uv|_{\gamma}^2 & \leq C^2 \sum\limits_{k \in \Z} \left( \sum\limits_{j \in \Z} \sqrt{\gamma_{k-j}} |\hat{u}(k-j) \hat{v}(j)| + \sum\limits_{j \in \Z} \sqrt{\gamma_j} |\hat{u}(k-j) \hat{v}(j)| \right)^2, \\
			& \leq C^2 \sum\limits_{k \in \Z} \left[ \left( \sum\limits_{j \in \Z} \sqrt{\gamma_{k-j}} |\hat{u}(k-j) \hat{v}(j)| \right)^2 + \left( \sum\limits_{j \in \Z} \sqrt{\gamma_j} |\hat{u}(k-j) \hat{v}(j)| \right)^2 \right], \\
			& \leq C^2 \left( \| \left( \sqrt{\gamma_k}|\hat{u}| \right) \star |\hat{v}| \|_{l^2}^2 + \| |\hat{u}| \star \left( \sqrt{\gamma_k}|\hat{v}| \|_{l^2}^2 \right) \right).
	\end{align*}
	We remind that for two functions $f$ and $g$ defined from $\N$ to $\R$ such that $f \in l^1$ and $g \in l^2$, we have
	\[  \| |f| \star |g| \|_{l^2}^2 \leq \| g \|_{l^2}^2 \| f \|_{l^1}^2. \]		
	Thus
	\[ |uv|_{\gamma}^2 \leq C \left( |u|_{\gamma}^2 \|\hat{v}\|_{l^1}^2 + |v|_{\gamma}^2 \|\hat{u}\|_{l^1}^2 \right). \]
	We know there exists a constant $c>0$ such that $\| \hat{u} \|_{l^1} \leq c |u|_{\gamma}$ if $\sum\limits_{k \in \Z} \frac{1}{\gamma_k} < +\infty$. Then, there exists $\tilde{C}>0$ such that
	\[ |uv|_{\gamma} \leq \tilde{C} |u|_{\gamma} |v|_{\gamma}.\]
\end{proof}

\begin{remark}
	With $\gamma_k=k^{2s}$ we find the standard Sobolev injection. Indeed, we have the inequality $\sqrt{\gamma_k} \leq C \left( \sqrt{\gamma_{k-j}} + \sqrt{\gamma_j} \right)$ and if $s>\frac{1}{2}$ then $\sum\limits_{k \in \Z} \frac{1}{\gamma_k} < +\infty$.
\end{remark}
	
	\subsection{Local well-posedness}
	We can study the well-posedness of problem dBBM. We consider the Cauchy problem
	\begin{align}
		& u_t + u_x - u_{xxt} + uu_x + \mathscr{L}_{\gamma}(u) = 0, \quad x \in \T, \ t \in [0,T] \label{pb_cauchy1}\\
		& u(x,t=0) = u_0(x). \label{pb_cauchy2}
	\end{align}

\begin{Thm}\label{thm_local}
	Assume that $\sqrt{\gamma_{k+j}} \leq C(\sqrt{\gamma_k} + \sqrt{\gamma_j})$ for all $k$ and $j$ in $\Z$, $\sum\limits_{k \in \Z} \frac{1}{\gamma_k} < +\infty$ and $u_0 \in \Hg$. Then there exists $T=\frac{1}{C_0 \Ng{u_0}}>0$ and there exists a unique $u \in \mathscr{C} \left( [0,T],\Hg \right)$ solution of the Cauchy problem \eqref{pb_cauchy1}-\eqref{pb_cauchy2}.

	Moreover, for all $M>0$ with $\Ng{u_0} \leq M$ and $\Ng{v_0} \leq M$, there exists a constant $C_1>0$ such that the solutions $u$ and $v$, associated with the initial data $u_0$ and $v_0$ respectively, satisfy for all $t \leq \frac{1}{C_0 M}$
	\[ \Ng{u(\cdot,t) - v(\cdot,t)} \leq C_1 \Ng{u_0 - v_0}. \]
\end{Thm}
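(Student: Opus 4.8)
The plan is to recast the Cauchy problem \eqref{pb_cauchy1}-\eqref{pb_cauchy2} as a fixed-point equation in $\mathscr{C}([0,T],\Hg)$ and invoke the Banach contraction principle. Applying the operator $(1-\partial_{xx})^{-1}$ to \eqref{pb_cauchy1}, which is the Fourier multiplier with symbol $(1+k^2)^{-1}$ and is bounded on every $H_\gamma$, the equation becomes
\[
u_t + (1-\partial_{xx})^{-1}\!\left(u_x + uu_x + \mathscr{L}_\gamma(u)\right) = 0.
\]
Writing $B(u) := (1-\partial_{xx})^{-1}\partial_x\!\left(u + \tfrac12 u^2\right)$ and noting that $(1-\partial_{xx})^{-1}\mathscr{L}_\gamma$ is the multiplier with symbol $\gamma_k/(1+k^2)$, I would integrate in time and study the map
\[
(\Phi u)(t) := u_0 - \int_0^t \Big( B(u)(s) + (1-\partial_{xx})^{-1}\mathscr{L}_\gamma(u)(s)\Big)\,ds
\]
on the closed ball $\mathcal{B}_R := \{u \in \mathscr{C}([0,T],\Hg) : \sup_{t\le T}\Ng{u(t)} \le R\}$ with $R = 2\Ng{u_0}$.

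The key estimates are all consequences of the two preceding propositions. First, since $\sum_k \gamma_k^{-1}<+\infty$ and $\sqrt{\gamma_{k+j}}\le C(\sqrt{\gamma_k}+\sqrt{\gamma_j})$, the third Proposition gives that $\Hg$ is an algebra, so $\Ng{u^2} \le \tilde C\,\Ng{u}^2$; the multiplier $(1-\partial_{xx})^{-1}\partial_x$ has symbol bounded by $1$ in modulus (indeed $|k|/(1+k^2)\le \tfrac12$), hence it maps $\Hg$ to $\Hg$ with norm $\le 1$. Second, for the damping term, I would show $(1-\partial_{xx})^{-1}\mathscr{L}_\gamma$ is bounded on $\Hg$: on the $k$-th mode it acts by $\gamma_k/(1+k^2)$, and although this need not be bounded a priori, one uses instead the crude bound $\Ng{(1-\partial_{xx})^{-1}\mathscr{L}_\gamma u}^2 = \sum_k \gamma_k \big(\tfrac{\gamma_k}{1+k^2}\big)^2|\hat u_k|^2$ — here I would actually avoid this by keeping $\mathscr{L}_\gamma(u)$ inside the integral and estimating $\Ng{\mathscr{L}_\gamma(u)(s)}$ differently, OR by observing that in the contraction scheme one only needs $\int_0^t \Ng{\mathscr{L}_\gamma u(s)}\,ds$ controlled, which follows once we note $\Ng{\cdot}$ already encodes the $\gamma$-weights; more cleanly, I'd absorb the linear terms $u_x + \mathscr{L}_\gamma(u)$ via the variation-of-constants formula for the linear semigroup generated by $-(1-\partial_{xx})^{-1}(\partial_x + \mathscr{L}_\gamma)$, which is a contraction semigroup on $\Hg$ because $\langle (1-\partial_{xx})^{-1}\mathscr{L}_\gamma u, u\rangle_\gamma \ge 0$ and $\partial_x$ is skew-adjoint. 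Then only the quadratic term $\tfrac12(1-\partial_{xx})^{-1}\partial_x(u^2)$ needs the algebra estimate.

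With these in hand, the stability/self-map estimate reads $\sup_{t\le T}\Ng{\Phi u(t) - u_0} \le T\,C_0 R^2$ for some universal $C_0$ built from $\tilde C$ and the multiplier bound, so choosing $T = \tfrac{1}{C_0 \Ng{u_0}}$ (compatibly with $R = 2\Ng{u_0}$) keeps $\Phi(\mathcal{B}_R)\subset \mathcal{B}_R$; the contraction estimate uses $u^2 - v^2 = (u+v)(u-v)$ together with the algebra inequality to get $\sup_{t\le T}\Ng{\Phi u(t) - \Phi v(t)} \le T\,C_0'\,R\,\sup_{t\le T}\Ng{u(t)-v(t)} \le \tfrac12 \sup_{t\le T}\Ng{u(t)-v(t)}$ after possibly shrinking $T$ by a fixed factor (which only changes the constant $C_0$). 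Banach's theorem then yields the unique fixed point $u \in \mathscr{C}([0,T],\Hg)$.

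For the Lipschitz dependence on initial data, I would run the same computation on $w = u-v$ with two different data $u_0, v_0$: subtracting the integral equations, $\Ng{w(t)} \le \Ng{u_0-v_0} + \int_0^t C_0'\big(\Ng{u(s)}+\Ng{v(s)}\big)\Ng{w(s)}\,ds \le \Ng{u_0-v_0} + \int_0^t 2C_0' M\,\Ng{w(s)}\,ds$ using $\Ng{u(s)},\Ng{v(s)}\le 2M$ on $[0,1/(C_0 M)]$, and Grönwall's lemma closes it with $C_1 = e^{2C_0'M\cdot t} \le e^{2C_0'/C_0}$, a constant depending only on $M$ through the time interval's endpoint — uniform in fact. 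The main obstacle is the damping term: because $(\gamma_k)$ is essentially arbitrary (subject only to the summability and triangle-type conditions) the multiplier $\gamma_k/(1+k^2)$ can be unbounded, so one cannot naively treat $\mathscr{L}_\gamma$ as a bounded perturbation on $\Hg$; the resolution is to keep it with the linear part and exploit that $-(1-\partial_{xx})^{-1}(\partial_x+\mathscr{L}_\gamma)$ generates a $\mathscr{C}_0$-semigroup of contractions on $\Hg$ (equivalently, work with $e^{-t(1-\partial_{xx})^{-1}(\partial_x+\mathscr{L}_\gamma)}$ via Lumer–Phillips), leaving only the genuinely nonlinear quadratic term — which is exactly where the algebra property of $\Hg$ is used — to be handled by the contraction argument.
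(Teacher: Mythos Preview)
Your proposal is correct and, once you make the mid-course correction you yourself flag---absorbing $\partial_x + \mathscr{L}_\gamma$ into the linear part and using the variation-of-constants formula---it coincides with the paper's proof: Duhamel with the explicit contraction semigroup $S_t$ on $\Hg$ given by the Fourier multiplier $e^{-(ik+\gamma_k)t/(1+k^2)}$, the algebra estimate for the quadratic term $(1-\partial_{xx})^{-1}\partial_x(u^2/2)$, and Banach's fixed-point theorem on a ball around $u_0$. The only cosmetic differences are that the paper verifies $\Ng{S_t v}\le \Ng{v}$ by direct computation rather than invoking Lumer--Phillips, works on the ball $\{\,\Ng{u(t)-u_0}\le 3\Ng{u_0}\,\}$ instead of $\{\,\Ng{u(t)}\le 2\Ng{u_0}\,\}$, and obtains the Lipschitz dependence by re-running the contraction estimate rather than Gr\"onwall.
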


\begin{proof}
	Let write the initial value problem \eqref{pb_cauchy1}-\eqref{pb_cauchy2} as
	\begin{align}
		\partial_t u = Au + f(u), \label{nonlinpart1}\\
		u(t=0) = u_0(x), \label{nonlinpart2}
	\end{align}
	where
	\begin{align*}
		& Au = -\left( 1-\partial_{xx} \right)^{-1} \left( \partial_x u + \mathscr{L}_{\gamma} (u) \right),\\
		& f(u) = -\left( 1-\partial_{xx} \right)^{-1} \partial_x \left( \frac{u^2}{2} \right).\\
	\end{align*}
	According to the Duhamel's formula, $u$ is solution of the Cauchy problem \eqref{nonlinpart1}-\eqref{nonlinpart2} if and only if 
	\[ u(t) = \phi \left( u(t) \right) = S_t u_0 + \int_0^t S_{t-\tau} f\left( u(\tau) \right) d\tau, \]
	where	
	\[ S_t v = \sum\limits_{k \in \Z} e^{i kx} e^{-\frac{(ik+\gamma_k)}{1+k^2}t} \hat{v}(k). \]
	The purpose is to show that $u$ is the unique fixed point of $\phi$. We define the closed ball 
	\[ \overline{B}(T) = \left\{ u \in \mathscr{C} \left( [0,T];H_{\gamma}(\T) \right) ; |u(t)-u_0|_{\gamma} \leq 3 |u_0|_{\gamma} \right\}. \]
	Our purpose is to apply the Banach fixed point theorem.
	
	Let $u \in \overline{B}(T)$. We show that $\phi \left( u(t) \right) \in \overline{B}(T)$.
			  From the triangle inequality, we have:
			  \[ |\phi \left( u(t) \right)|_{\gamma} \leq |S_t u_0|_{\gamma} + \int_0^t |S_{t-\tau} \left( f(u) \right)|_{\gamma} d\tau. \]		
			  On the one hand, since $\gamma_k \geq 0$
			  \begin{align*}
			  	  |S_t u_0|_{\gamma}^2 & = \sum\limits_{k \in \Z} \gamma_k |\widehat{S_t u_0} (k)|^2, \\
			  	  	  & \leq \sum\limits_{k \in \Z} \gamma_k |e^{-\frac{\gamma_k}{1+k^2} t} \widehat{u_0} (k)|^2, \\
			  	  	  & \leq |u_0|_{\gamma}^2, 
			  \end{align*}
			  and
			  \[ |S_{t - \tau} f \left( u(\tau) \right)|_{\gamma} \leq |f \left( u(\tau) \right)|_{\gamma}. \] 
			  On the other hand, we need to upper bound the term $|f \left( u(\tau) \right)|_{\gamma}$. We have
			  \begin{align*}
			  	  |f \left( u(\tau) \right)|_{\gamma}^2 & = |- \left( 1 - \partial_{xx} \right)^{-1} \partial_x \left( \frac{u^2}{2}(\tau) \right)|_{\gamma}^2, \\
			  	  	  & = \sum\limits_{k \in \Z} \gamma_k \left| \mathscr{F} \left( - \left( 1 - \partial_{xx} \right)^{-1} \partial_x \left( \frac{u^2}{2}(\tau) \right) \right) \right|^2, \\
			  	  	  & = \sum\limits_{k \in \Z} \gamma_k \left| \frac{ik}{1+k^2} \widehat{\frac{u^2}{2}} \right|^2.
			  \end{align*}
			  However, since $\frac{|k|}{1+k^2} \leq 1$ and $\Hg$ is an algebra, it gets
			  \[ \Ng{f \left( u(\tau) \right)} \leq C \Ng{u}^2. \]
			  The Duhamel's formula implies for $0 \leq t \leq T$
			  \begin{align*}
			  	  |\phi \left( u(t) \right)|_{\gamma} & \leq |u_0|_{\gamma} + \int_{0}^{t} |u(\tau)|_{\gamma}^2 d\tau, \\
			  	  	  & \leq |u_0|_{\gamma} + C \left( \sup\limits_{t \in [0,T]} |u(t)|_{\gamma} \right)^2 T.
			  \end{align*}
			  But $u \in \overline{B}(T)$, so
			  	 \[ \Ng{u(t)}-\Ng{u_0} \leq |u(t)-u_0|_{\gamma} \leq 3|u_0|_{\gamma}, \]
			  that implies
			  	 \[ |u(t)|_{\gamma} \leq 4 |u_0|_{\gamma}. \]
			  We have $\phi \left( u(t) \right) \in \overline{B}(T)$ if $|\phi \left( u(t) \right) -u_0|_{\gamma} \leq 3 |u_0|_{\gamma}$. 
			  	 The inequality
			  	 \[ |\phi \left( u(t) - u_0\right)|_{\gamma} \leq 2|u_0|_{\gamma} + C T \left( 16 |u_0|_{\gamma}^2 \right) \leq 3|u_0|_{\gamma}, \]
			  is true if we choose
			  \[ 0 < T \leq \frac{ |u_0|_{\gamma}}{16C |u_0|_{\gamma}^2} = \frac{1}{16C} \left( \frac{1}{|u_0|_{\gamma}} \right). \]
			  
		Now we show that $\phi$ is a contraction mapping on $\overline{B}(T)$.
			  Let $u,\ v \in \overline{B}(T)$. We have
			  \begin{align*}
			   \Ng{\phi \left( u(t) \right) - \phi \left( v(t) \right)} & = \Ng{\int_0^t S_{t-\tau} \left(   f(u) - f(v) \right) d\tau}, \\
			   & \leq \int_0^t \Ng{\frac{u^2}{2} - \frac{v^2}{2}} (\tau) d\tau. 
			   \end{align*}
			  We notice that $u^2-v^2=(u-v)(u+v)$. Since $\Hg$ is an algebra, we have
			  \begin{align*}
			  	  \Ng{u^2-v^2} & \leq C \Ng{u-v}\Ng{u+v}, \\
			  	  	  & \leq C \left( \Ng{u} + \Ng{v} \right) \Ng{u-v}.
			  \end{align*}
			  And for $u,\ v \in \overline{B}(T)$,
			  \[ \Ng{u^2 - v^2} \leq 8 C \Ng{u_0} \Ng{u-v}. \]
			  We infer for all $t \in [0,T]$
			  \begin{align*}
			  	  \Ng{\phi \left( u(t) \right) - \phi \left( v(t) \right)} & \leq 8C \Ng{u_0} \int_0^t \Ng{u-v} (\tau) d\tau, \\
			  	  	  & \leq 8 C \Ng{u_0} T \sup\limits_{t \in [0,T]} \Ng{(u-v)(t)}.
			  \end{align*}
			  Hence
			  \[ \sup\limits_{t \in [0,T]} \Ng{\phi \left( u(t) \right) - \phi \left( v(t) \right)} \leq \left( 8 C \Ng{u_0}T \right) \sup\limits_{t \in [0,T]} \Ng{u(t)-v(t)}. \]
			  Consequently $\phi$ is a contraction mapping if $8C \Ng{u_0} T <1$, i.e., $T<\frac{1}{8C\Ng{u_0}}$.
			  Finally, from the Banach fixed-point theorem, $\phi$ has a unique fixed-point solution of $u(t)=\phi\left( u(t) \right)$. So, there exists a unique solution of the Cauchy problem.

			 It remains to prove the continuity with the initial data. Let $u$ and $v$ solutions of the Cauchy problem \eqref{pb_cauchy1}-\eqref{pb_cauchy2} with initial data $u_0$ and $v_0$ respectively, such that $\Ng{u_0} \leq M$ and $\Ng{v_0} \leq M$. The Duhamel's formula gives for $t\in [0,T]$, $T \leq \frac{1}{C_0 M}$
			 \begin{align*}
			 	\Ng{u-v} & \leq \Ng{u_0 - v_0} + \int_0^t \Ng{f(u) - f(v)} d\tau, \\
			 	& \leq \Ng{u_0 - v_0} + C'T \left( \Ng{u_0} + \Ng{v_0} \right) \sup\limits_{t \in [0,T]} \Ng{u-v}. \\
			 \end{align*}
			 It implies 
			 \[ \Ng{u-v} \leq C_1 \Ng{u_0 - v_0}. \]
\end{proof}

	\subsection{Behavior of the solution}
	In this part, we aim to get some estimations of the damping rate. Actually in the case without damping, the $H^1$-norm of the solution is invariant during time. Here this norm is decreasing. We adapt the work done on KdV equation \cite{CheSad2}.

Let us begin with the linear equation, that reads
\begin{equation}\label{lBBM}
	u_t-u_{txx}+u_x+\mathscr{L}_{\gamma}(u)=0 \quad x \in \T, \ t \in [0,T].
\end{equation}

Let $u$ be valued in $L^2(\T), \ \forall t>0$. We write $u$ as a Fourier series and, due to the orthogonality of the trigonometric polynomials, we obtain
\[ (1+k^2)\frac{d\hat{u}_k(t)}{dt} + (\gamma_k + ik) \hat{u}_k(t) = 0.\]
Hence
\[ \hat{u}_k(t)=e^{-\frac{\gamma_k + ik}{1+k^2}t} \hat{u}_k(0). \]
It follows that
\[ |u|_{H^1}^2 = \sum\limits_{k \in \mathbb{Z}} (1+k^2)|\hat{u}_k(t)|^2 = \sum\limits_{k \in \mathbb{Z}} (1+k^2) e^{-\frac{2\gamma_k}{1+k^2}t} |\hat{u}_k(0)|^2.\]

\begin{Prop}
	Let $\gamma_k > 0, \ \forall k \in \mathbb{Z}$ and $u_0 \in H_{\delta}(\T)$ where $\delta_k = \frac{(1+k^2)^2}{\gamma_k}$. Then the unique solution $u$ of \eqref{lBBM} verifies
	\[ |u|_{H^1}^2 \leq \min \left( \frac{e^{-1}}{2t}|u_0|_{\delta}^2 , |u_0|_{H^1}^2 \right), \ \forall t > 0. \]
	More generally, if $u_0 \in H_{\beta\delta}(\T)$ then
	\[ |u|_{\beta'}^{2} \leq \frac{e^{-1}}{2t}|u_0|_{\beta\delta}^2 \mbox{~where~} \beta_k'=(1+k^2)\beta_k. \]
\end{Prop}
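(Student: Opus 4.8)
The plan is to read the result off directly from the explicit Fourier representation of the solution of \eqref{lBBM} obtained just above the statement, and to reduce both bounds to a single elementary one-variable estimate.

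The scalar inequality doing all the work is: for every fixed $t>0$,
\[ \sup_{s>0} s\,e^{-2st} = \frac{e^{-1}}{2t}, \]
the supremum being attained at $s=\frac{1}{2t}$. This is immediate from differentiation: $\frac{d}{ds}\bigl(se^{-2st}\bigr)=e^{-2st}(1-2st)$ vanishes only at $s=\frac{1}{2t}$, where it changes sign from $+$ to $-$, so the maximal value is $\frac{1}{2t}e^{-1}$.

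For the first assertion I would use that, by the computation preceding the statement, $|u|_{H^1}^2=\sum_{k\in\Z}(1+k^2)\,e^{-\frac{2\gamma_k}{1+k^2}t}\,|\hat u_k(0)|^2$. For each $k$, writing $s_k=\frac{\gamma_k}{1+k^2}>0$ and recalling $\delta_k=\frac{(1+k^2)^2}{\gamma_k}$, one has
\[ (1+k^2)\,e^{-\frac{2\gamma_k}{1+k^2}t} = \delta_k\,s_k\,e^{-2s_kt} \le \delta_k\,\frac{e^{-1}}{2t} \]
by the scalar inequality. Multiplying by $|\hat u_k(0)|^2$ and summing over $k\in\Z$ gives $|u|_{H^1}^2\le\frac{e^{-1}}{2t}\sum_{k\in\Z}\delta_k|\hat u_k(0)|^2=\frac{e^{-1}}{2t}|u_0|_\delta^2$. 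On the other hand, since $\gamma_k>0$ and $t>0$ we have $e^{-\frac{2\gamma_k}{1+k^2}t}\le 1$ for every $k$, hence $|u|_{H^1}^2\le\sum_{k\in\Z}(1+k^2)|\hat u_k(0)|^2=|u_0|_{H^1}^2$; taking the minimum of the two bounds yields the first claim.

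The general statement is the very same computation with one extra weight. With $\beta'_k=(1+k^2)\beta_k$ and the sequence $\beta\delta=(\beta_k\delta_k)_{k\in\Z}$,
\[ |u|_{\beta'}^2 = \sum_{k\in\Z}(1+k^2)\beta_k\,e^{-\frac{2\gamma_k}{1+k^2}t}\,|\hat u_k(0)|^2 = \sum_{k\in\Z}\beta_k\delta_k\,s_k\,e^{-2s_kt}\,|\hat u_k(0)|^2 \le \frac{e^{-1}}{2t}\sum_{k\in\Z}\beta_k\delta_k|\hat u_k(0)|^2 = \frac{e^{-1}}{2t}|u_0|_{\beta\delta}^2. \]
There is no genuine obstacle here; the only point deserving a word is the passage from the termwise bound to the series bound, which is legitimate because $s\,e^{-2st}\le\frac{e^{-1}}{2t}$ holds uniformly in $s$ (hence in $k$), while finiteness of the resulting series is exactly the hypothesis $u_0\in H_\delta(\T)$ (respectively $u_0\in H_{\beta\delta}(\T)$).
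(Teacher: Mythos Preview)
Your proof is correct and follows essentially the same route as the paper: both rely on the explicit Fourier solution and the elementary bound $\sup_{s>0} s e^{-2st}=\frac{e^{-1}}{2t}$ applied with $s=\frac{\gamma_k}{1+k^2}$. The only cosmetic difference is that the paper obtains the bound $|u|_{H^1}^2\le |u_0|_{H^1}^2$ via the energy identity $\frac{1}{2}\frac{d}{dt}|u|_{H^1}^2+|u|_\gamma^2=0$, whereas you read it off directly from $e^{-\frac{2\gamma_k}{1+k^2}t}\le 1$ in the Fourier representation.
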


\begin{proof}
	On one hand, the scalar product in $L^2(\T)$ of \eqref{lBBM} with $u$ provides 
	\[ \frac{1}{2}\frac{d}{dt}|u|_{H^1}^2 + \sum\limits_{k \in \mathbb{Z}} \gamma_k |\hat{u}_k(t)|^2=0.\]
	Hence 
	\[ \frac{d}{dt}|u|_{H^1}^{2} \leq 0. \]
	Consequently $|u|_{H^1}^{2} \leq |u_0|_{H^1}^{2}$.
	On the other hand, to obtain the second inequality, we write
	\begin{eqnarray*}
		|u|_{H^1}^2 & = & \sum\limits_{k \in \mathbb{Z}} (1+k^2) |\hat{u}_k|^2, \\
		 & = & \sum\limits_{k \in \mathbb{Z}} (1+k^2) e^{-\frac{2\gamma_k}{1+k^2}t} |\hat{u}_k(0)|^2, \\
		 & = & \sum\limits_{k \in \mathbb{Z}} (1+k^2)^2 \frac{\gamma_k}{1+k^2} e^{-\frac{2\gamma_k}{1+k^2}t} \frac{1}{\gamma_k}|\hat{u}_k(0)|^2.
	\end{eqnarray*}
	Since the function $\beta \mapsto \beta e^{-2\beta t}$ is uniformly bounded by $\frac{e^{-1}}{2t}$, we infer that 
	\[ |u|_{H^1}^2 \leq \frac{e^{-1}}{2t}|u_0|_{\delta}^2. \]
	In the same way we obtain the third inequality from
	\[ \beta_k |\hat{u}_k|^2 = \beta_k e^{-\frac{2\gamma_k}{1+k^2}t}|\hat{u}_k(0)|^2 = (1+k^2) \frac{\gamma_k}{1+k^2} e^{-\frac{2\gamma_k}{1+k^2}t} \left( \frac{\beta_k}{\gamma_k} |\hat{u}_k(0)|^2 \right).\]
\end{proof}

We can prove the two following results with similar arguments:

\begin{Prop}
	Assume that $\gamma_k \in [0,1], \ \forall k \in \mathbb{Z}$ and $u_0 \in H_{\delta^{(s)}}$, where $\delta_k^{(s)} = \frac{(1+k^2)^{s+1}}{\gamma_k^s}$. Then for all $s>0$, $u$ the solution of \eqref{lBBM} verifies $\forall t>0$
	\[ |u|_{H^1}^2 \leq \min \left( e^{-s}\left( \frac{s}{2t} \right)^s |u_0|_{\delta^{(s)}}^2, \ |u_0|_{H^1}^2 \right). \]
\end{Prop}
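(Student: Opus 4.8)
The plan is to repeat, essentially verbatim, the argument of the previous Proposition, with the exponent $s$ now playing the role that the exponent $1$ played there. Two bounds have to be produced, the second of which yields the decay rate. The first bound, $|u|_{H^1}^2 \le |u_0|_{H^1}^2$, requires nothing new: the $L^2(\T)$ scalar product of \eqref{lBBM} with $u$ gives $\tfrac12\tfrac{d}{dt}|u|_{H^1}^2 + \sum_{k\in\Z}\gamma_k|\hat u_k(t)|^2 = 0$, and since $\gamma_k\ge 0$ this forces $\tfrac{d}{dt}|u|_{H^1}^2\le 0$, hence $|u|_{H^1}^2\le|u_0|_{H^1}^2$ for all $t>0$.

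For the decay bound I would start from the explicit expression $|u|_{H^1}^2=\sum_{k\in\Z}(1+k^2)e^{-\frac{2\gamma_k}{1+k^2}t}|\hat u_k(0)|^2$ established just above the statement, and factor out the weight $\delta_k^{(s)}$ by writing, for each $k$,
\[
(1+k^2)\,e^{-\frac{2\gamma_k}{1+k^2}t}
=\frac{(1+k^2)^{s+1}}{\gamma_k^s}\left(\frac{\gamma_k}{1+k^2}\right)^{\!s} e^{-\frac{2\gamma_k}{1+k^2}t}
=\delta_k^{(s)}\left(\frac{\gamma_k}{1+k^2}\right)^{\!s} e^{-\frac{2\gamma_k}{1+k^2}t}.
\]
The elementary fact replacing "$\beta\mapsto\beta e^{-2\beta t}$ is bounded by $\frac{e^{-1}}{2t}$" is that the map $\beta\mapsto\beta^s e^{-2\beta t}$ on $[0,+\infty)$ attains its maximum at $\beta=\frac{s}{2t}$ (immediate from differentiation: the derivative is $\beta^{s-1}e^{-2\beta t}(s-2t\beta)$), with maximal value $e^{-s}\left(\frac{s}{2t}\right)^s$. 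Applying this with $\beta=\frac{\gamma_k}{1+k^2}$ for every $k$ and summing gives
\[
|u|_{H^1}^2 \le e^{-s}\left(\frac{s}{2t}\right)^{\!s}\sum_{k\in\Z}\delta_k^{(s)}|\hat u_k(0)|^2
= e^{-s}\left(\frac{s}{2t}\right)^{\!s}|u_0|_{\delta^{(s)}}^2 .
\]
Taking the minimum of the two bounds finishes the proof.

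There is no genuine obstacle here: the only non-bookkeeping step is the one-variable optimization of $\beta^s e^{-2\beta t}$, which is trivial. I would only add a word on the hypothesis $\gamma_k\in[0,1]$, which is not actually used in either inequality above (as in the previous Proposition, $\gamma_k\ge 0$ suffices there); it is presumably retained so that $\delta_k^{(s)}=\frac{(1+k^2)^{s+1}}{\gamma_k^s}\ge(1+k^2)^{s+1}\ge 1+k^2$, which guarantees the continuous inclusion $H_{\delta^{(s)}}(\T)\hookrightarrow H^1(\T)$ and makes the minimum in the statement meaningful.
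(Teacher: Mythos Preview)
Your proof is correct and is precisely the natural generalization the paper intends when it says the result follows ``with similar arguments'': the same factorization trick as in the preceding Proposition, with $\beta\mapsto\beta e^{-2\beta t}$ replaced by $\beta\mapsto\beta^s e^{-2\beta t}$ and its maximum $e^{-s}(s/2t)^s$. Your closing remark on the role of the hypothesis $\gamma_k\in[0,1]$ is also apt.
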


\begin{Prop}
	We assume that there exist three positive constants $\alpha$, $\beta$ and $C$ such that $u$ the solution of \eqref{lBBM} verifies:
	\begin{enumerate}[i.]
		\item $|\widehat{u_0}_k|^2 \leq C \gamma_k^{2\delta}$ with $\delta = \alpha + \beta$.
		\item $\sum\limits_{k \in \mathbb{Z}}(1+k^2)^{2\alpha+1}\gamma_k^{2\beta} < +\infty$.
	\end{enumerate}
	Then
	\[ |u|_{H^1}^2 \leq C e^{-2a}\left( \frac{\alpha}{t} \right)^{2\alpha} \sum\limits_{k \in \mathbb{Z}}(1+k^2)^{2\alpha+1}\gamma_k^{2\beta} = O\left( \frac{1}{t^{2\alpha}} \right).  \]
\end{Prop}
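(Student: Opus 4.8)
The plan is to follow the same template as the two preceding propositions, working directly with the diagonalised linear flow. First I would recall that, since $u$ solves \eqref{lBBM}, its Fourier coefficients are $\hat u_k(t) = e^{-\frac{\gamma_k+ik}{1+k^2}t}\hat u_k(0)$, so that
\[ |u|_{H^1}^2 = \sum_{k\in\Z}(1+k^2)\,e^{-\frac{2\gamma_k}{1+k^2}t}\,|\hat u_k(0)|^2. \]
Then I would insert hypothesis i., $|\widehat{u_0}_k|^2 \le C\gamma_k^{2\delta}$ with $\delta=\alpha+\beta$, to obtain
\[ |u|_{H^1}^2 \le C\sum_{k\in\Z}(1+k^2)\,e^{-\frac{2\gamma_k}{1+k^2}t}\,\gamma_k^{2\alpha}\,\gamma_k^{2\beta}. \]

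The key algebraic step is to peel off the factors that carry the time decay. Writing $\mu_k := \frac{\gamma_k}{1+k^2}$, one has $\gamma_k^{2\alpha} = (1+k^2)^{2\alpha}\mu_k^{2\alpha}$, hence
\[ (1+k^2)\,e^{-\frac{2\gamma_k}{1+k^2}t}\,\gamma_k^{2\alpha} = (1+k^2)^{2\alpha+1}\,\mu_k^{2\alpha}\,e^{-2\mu_k t}. \]
Now I would use the elementary one-variable estimate $\sup_{\mu\ge 0}\mu^{2\alpha}e^{-2\mu t} = e^{-2\alpha}\left(\frac{\alpha}{t}\right)^{2\alpha}$, the maximum being attained at $\mu=\alpha/t$ by differentiation (this is the analogue of the bound ``$\beta e^{-2\beta t}\le \frac{e^{-1}}{2t}$'' used earlier). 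Applying it uniformly in $k$ gives
\[ |u|_{H^1}^2 \le C\,e^{-2\alpha}\left(\frac{\alpha}{t}\right)^{2\alpha}\sum_{k\in\Z}(1+k^2)^{2\alpha+1}\gamma_k^{2\beta}, \]
and hypothesis ii. makes the remaining series a finite constant, so the right-hand side is $O\!\left(t^{-2\alpha}\right)$, which is the claim.

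There is essentially no obstacle here: the argument is a direct adaptation of the proof of the preceding Proposition, the only genuinely new ingredient being the optimisation of $\mu\mapsto\mu^{2\alpha}e^{-2\mu t}$, which is routine calculus. One small point worth flagging is that, unlike in the $\delta^{(s)}$-proposition, no upper bound on the $\gamma_k$ is required, since here $\gamma_k$ enters only through the nonnegative power $\gamma_k^{2\beta}$; positivity of the $\gamma_k$ (or merely nonnegativity, with the convention that indices $k$ having $\gamma_k=0$ contribute nothing because then hypothesis i. forces $\hat u_k(0)=0$) is enough. I would also note that the factor $e^{-2a}$ in the displayed conclusion should read $e^{-2\alpha}$.
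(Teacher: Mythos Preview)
Your proposal is correct and follows precisely the route the paper indicates: the authors do not spell out a proof here but state that the result is obtained ``with similar arguments'' to the preceding proposition, and your computation --- expanding $|u|_{H^1}^2$ via the explicit Fourier solution, inserting hypothesis~i., factoring $\gamma_k^{2\alpha}=(1+k^2)^{2\alpha}\mu_k^{2\alpha}$ with $\mu_k=\gamma_k/(1+k^2)$, and optimising $\mu\mapsto\mu^{2\alpha}e^{-2\mu t}$ --- is exactly that argument. Your observation that $e^{-2a}$ is a typo for $e^{-2\alpha}$ is also correct.
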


We can now deal with the non-linear equation. We can find similar kind of decreasing but less explicit than in the linear case.
We remind the equation
\begin{equation} \label{eqBBM2}
    u_t-u_{txx}+u_x+uu_x+\mathscr{L}(u)=0
\end{equation}
The scalar product in $L^2$ of (\ref{eqBBM2}) with $u$ yields
\[ \frac{1}{2} \frac{d}{dt} |u|_{H^1}^2 + |u|_{\gamma}^2 = 0. \]

\begin{Prop}\label{prop4}
	We assume that
	\begin{enumerate}[i.]
		\item $\gamma_k > 0 \ \forall k \in \mathbb{Z}$,
		\item $u_0 \in \Hg \cap H^1(\T)$ with $\int\limits_0^L u_0(x)dx=0$,	
	\end{enumerate}	  
	Then $\lim\limits_{t \rightarrow + \infty} |u|_{H^1} = 0$.
\end{Prop}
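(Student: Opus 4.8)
\noindent The plan is to show first that $t\mapsto\|u(t)\|_{H^1}^2$ is non-increasing, hence converges to some $\ell^2\ge 0$, and then that $\ell=0$. The energy identity obtained above by pairing \eqref{eqBBM2} with $u$ in $L^2(\T)$, namely $\tfrac12\tfrac{d}{dt}\|u\|_{H^1}^2=-|u|_\gamma^2\le 0$, gives a uniform $H^1$ bound, so the local solution of Theorem~\ref{thm_local} extends to all $t\ge 0$ and stays in $\mathscr{C}\!\left([0,+\infty),\Hg\cap H^1(\T)\right)$; monotonicity then forces $\|u(t)\|_{H^1}^2\to\ell^2$, and integrating in time gives $\int_0^{+\infty}|u(\tau)|_\gamma^2\,d\tau=\tfrac12\left(\|u_0\|_{H^1}^2-\ell^2\right)<+\infty$. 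I also note that the $k=0$ mode satisfies $\tfrac{d}{dt}\widehat u_0=-\gamma_0\widehat u_0$, so $\int_0^L u(x,t)\,dx=e^{-\gamma_0 t}\int_0^L u_0(x)\,dx=0$ is preserved for all $t$.

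Second I would prove decay of each Fourier mode. Fix $k\in\Z$: since $\gamma_k>0$ and $\gamma_k|\widehat u_k(\tau)|^2\le|u(\tau)|_\gamma^2$, the first step yields $\int_0^{+\infty}|\widehat u_k(\tau)|^2\,d\tau<+\infty$; moreover, from the Fourier form of \eqref{eqBBM2}, $(1+k^2)\tfrac{d}{dt}\widehat u_k=-(\gamma_k+ik)\widehat u_k-\tfrac{ik}{2}\widehat{u^2}_k$, whose right-hand side is bounded uniformly in $t$ because $\|u(t)\|_{H^1}\le\|u_0\|_{H^1}$ yields a uniform bound on $\|u(t)\|_{L^\infty}$ and hence on $\|u^2(t)\|_{L^\infty}$. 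Thus each $\widehat u_k$ is uniformly Lipschitz on $[0,+\infty)$, and a uniformly continuous, square-integrable function on $[0,+\infty)$ must tend to $0$; so $\widehat u_k(t)\to 0$ for every $k$. Combining this frequency-wise decay with the uniform $H^1$ bound and the compact embedding $H^1(\T)\hookrightarrow\hookrightarrow\mathscr{C}(\T)$, one gets $u(t)\to 0$ strongly in $L^\infty(\T)$ and in $L^2(\T)$, hence $u(t)\rightharpoonup 0$ weakly in $H^1(\T)$, as $t\to+\infty$ (every subsequence has a further subsequence converging weakly in $H^1$, necessarily to $0$).

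The final step, $\ell=0$, is where I expect the real difficulty, because one must rule out escape of $H^1$-energy to arbitrarily high frequencies, which the linear part cannot provide since $v\mapsto S_t v$ is only strongly --- not uniformly --- stable on $H^1$ when $\gamma_k/(1+k^2)\to 0$. When $\gamma_k\ge c(1+k^2)$ (parabolic damping or stronger) the conclusion is immediate: then $|u|_\gamma^2\ge c\,\|u\|_{H^1}^2$ after discarding the (vanishing) $k=0$ mode, so $\int_0^{+\infty}\|u(\tau)\|_{H^1}^2\,d\tau<+\infty$, which with monotonicity of $\|u(t)\|_{H^1}^2$ forces $\ell=0$. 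In general, I would run a high-frequency tail estimate on $E_N(t):=\sum_{|k|>N}(1+k^2)|\widehat u_k(t)|^2$: differentiating \eqref{eqBBM2} mode by mode,
\[ \frac{d}{dt}E_N(t)=-2\sum_{|k|>N}\gamma_k|\widehat u_k(t)|^2+\big\langle P_{>N}u_x(t),\,u^2(t)\big\rangle_{L^2}, \]
where $P_{>N}$ is the projection onto frequencies $|k|>N$. The first term is $\le 0$ and integrable in time, while the cross term satisfies $\big|\langle P_{>N}u_x,u^2\rangle\big|\le\|u_x\|_{L^2}\,\|P_{>N}(u^2)\|_{L^2}\le C(1+N^2)^{-1/2}\|u\|_{H^1}^3$ (since $H^1(\T)$ is an algebra) and also tends to $0$ as $t\to+\infty$ because $\|u(t)\|_{L^\infty}\to 0$. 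The goal is to combine these, together with the fact that the low-frequency part $\sum_{|k|\le N}(1+k^2)|\widehat u_k(t)|^2\to 0$ (a finite sum of decaying modes), into a bound on $E_N(t)$ that is \emph{uniform in} $t$ and vanishes as $N\to+\infty$; since $\lim_{t\to+\infty}E_N(t)=\ell^2$ for every $N$, such tightness forces $\ell=0$ and upgrades the weak convergence of the second step to $\|u(t)\|_{H^1}\to 0$, which is the claim. I expect the genuinely hard point to be making the cross-term contribution uniform in $t$, so that the $O(N^{-1})$ errors do not accumulate over the infinite time horizon --- this is precisely where the smoothing of the Duhamel nonlinearity $f(u)=-(1-\partial_{xx})^{-1}\partial_x(u^2/2)$ and the $L^\infty$-decay of $u(t)$ must be exploited together.
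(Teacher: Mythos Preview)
Your argument is considerably more careful than the paper's own proof, which occupies only a few lines. The paper argues as follows: $t\mapsto|u(t)|_{H^1}^2$ is nonincreasing, hence converges to some $C\ge 0$; it then asserts (with the sole justification ``since $u\in H^1\cap H_\gamma$'') that $|u(t)|_\gamma^2\to 0$; from $\gamma_k>0$ this gives $\hat u_k(t)\to 0$ for every $k$; and finally it writes ``and then $u=0$, which means that $C=0$''. In particular, the paper does \emph{not} address the passage from mode-wise decay $\hat u_k(t)\to 0$ to strong $H^1$-decay, which is exactly the step you single out as the crux, nor does it justify $|u(t)|_\gamma^2\to 0$ beyond the integrability you derive.

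Your first two steps --- the energy identity, global existence and the uniform $H^1$ bound, integrability of $t\mapsto|u(t)|_\gamma^2$, the uniform Lipschitz bound on each $\hat u_k$ yielding $\hat u_k(t)\to 0$, and the resulting weak-$H^1$/strong-$L^2$ convergence to $0$ --- already go strictly beyond what the paper writes and are correct. Your proposed tail estimate on $E_N(t)=\sum_{|k|>N}(1+k^2)|\hat u_k(t)|^2$ is a natural route to upgrade weak to strong $H^1$ convergence, and your observation that $\lim_{t\to\infty}E_N(t)=\ell^2$ for each fixed $N$ is right. The point you flag as unfinished --- controlling $\int_0^\infty\langle P_{>N}u_x,u^2\rangle\,dt$ so that the $O((1+N^2)^{-1/2})$ errors do not accumulate --- is genuine, and the paper offers nothing here either: its proof simply asserts the conclusion at this step. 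So your caution is well founded; your sketch is already more complete than the paper's argument, and the gap you identify is present in the paper's proof as well.
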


\begin{proof}
	Because of $\gamma_k>0$, we have $\frac{d |u|_{H^1}^2}{dt} \leq 0$ and then the function $t \mapsto |u|_{H^1}^2$ is decreasing and consequently $\lim\limits_{t \rightarrow + \infty} |u(t)|_{H^1}^2 = C$. We notice that we also have $|u|_{H^1} \leq |u_0|_{H^1}$. Since $u \in H^1 \cap H_{\gamma}$, we deduce that $\lim\limits_{t \rightarrow + \infty} |u(t)|_{\gamma}^2 = 0$. Finally, since $\gamma_k > 0$, we have $\lim\limits_{t \rightarrow + \infty} \hat{u}_k = 0$ and then $u=0$, which means that $C=0$.
	
\end{proof}

\begin{Lemme}
	For all function $v$ smooth enough, we set $\bar{v} = \frac{1}{L} \int\limits_{0}^{L} v(x,t) dx$. Let $u$ be the solution of dBBM valued in $\Hg \cap H^2(\T)$. Then $\bar{u}(t) = e^{-\gamma_0 t}\bar{u}(0)$.
\end{Lemme}

\begin{proof}
	We integrate the equation in space on the interval $[0,L]$ and we obtain
	\[ \int\limits_{0}^{L} \left( \frac{\partial u}{\partial t} + \mathscr{L}_\gamma (u) \right) dx = 0 .\]
	But
	\[ \int\limits_{0}^{L} \mathscr{L}_{\gamma} (u)(x,t) dx = \int\limits_{0}^{L} \sum\limits_{k \in \mathbb{Z}} \gamma_k \hat{u}_k e^{\frac{2i\pi kx}{L}} dx = L\gamma_0 \hat{u}_0 = \gamma_0 \int\limits_{0}^{L} u \ dx.\]
	Hence we have 
	\[ \frac{d}{dt} \left( \frac{1}{L} \int\limits_{0}^{L} u(x,t)dx \right) + \frac{1}{L} \int\limits_{0}^{L} \mathscr{L}_{\gamma}(u)(x,t) dx = 0,\]
	Therefore
	\[ \frac{d\bar{u}}{dt} + \gamma_0 \bar{u} = 0 .\]
	We deduce the desired result.
\end{proof}

The solution converges to 0 in $H^1$ with a rate depending on the $H_{\gamma}$-norm. Thus, as in \cite{CheSad2}, we introduce the following ratio function
\begin{equation}\label{fctG}
	 G : (u,t) \mapsto G(u,t) = \frac{|u|_{\gamma}}{|u|_{H^1}} = \sqrt{\frac{\sum\limits_{k \in \mathbb{Z}} \gamma_k |\hat{u}_k|^2}{\sum\limits_{k \in \mathbb{Z}} (1+k^2) |\hat{u}_k|^2}} .
\end{equation}
To simplify the writings, we use $G(t)$ instead of $G(u,t)$ when no confusion is possible.  

\begin{Prop}
	Let $u$ be the unique solution of dBBM valued in $\Hg \cap H^1(\T)$. We assume that $G$ is $\mathscr{C}^1$ in time. Then we have the following equalities.
	\begin{enumerate}[i.]
		\item $|u(t)|_{H^1}^2 = e^{-2\int\limits_{0}^{t} G^2(s)ds} |u_0|_{H^1}^2$,
		\item $|u(t)|_{\gamma}^2 = G^2 (t) e^{-2\int\limits_{0}^{t} G^2(s)ds} |u_0|_{H^1}^2$.
	\end{enumerate}
	In particular, $\lim\limits_{t \rightarrow + \infty} |u|_{H^1}^2 = 0$ if and only if $t \mapsto G(t) \notin L_t^2(0,+\infty)$.
\end{Prop}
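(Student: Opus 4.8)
The plan is to turn the energy identity recorded just above, namely $\frac{1}{2}\frac{d}{dt}|u|_{H^1}^2 + |u|_{\gamma}^2 = 0$, into a scalar linear ODE for the quantity $y(t) := |u(t)|_{H^1}^2$. By the very definition \eqref{fctG} of $G$ one has $|u|_{\gamma}^2 = G^2(t)\,|u|_{H^1}^2 = G^2(t)\,y(t)$, so that the energy identity becomes
\[ y'(t) = -2\,G^2(t)\,y(t). \]
Here $y(t) > 0$ for every $t$ as soon as $u_0 \neq 0$: indeed if $y(t_0) = 0$ for some $t_0$, then $u(\cdot,t_0) = 0$ and uniqueness in Theorem \ref{thm_local} forces $u \equiv 0$, hence $u_0 = 0$ (the case $u_0 = 0$ being trivial, and anyway excluded since then $G$ is not defined).

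Next I would integrate this ODE. Since $G$ is assumed $\mathscr{C}^1$ in time, $t \mapsto G^2(t)$ is continuous, $y$ is $\mathscr{C}^1$ and strictly positive, so $\frac{d}{dt}\log y(t) = y'(t)/y(t) = -2\,G^2(t)$. Integrating from $0$ to $t$ gives $\log y(t) - \log y(0) = -2\int_0^t G^2(s)\,ds$, that is,
\[ |u(t)|_{H^1}^2 = y(t) = y(0)\, e^{-2\int_0^t G^2(s)\,ds} = e^{-2\int_0^t G^2(s)\,ds}\,|u_0|_{H^1}^2, \]
which is assertion (i). Assertion (ii) then follows at once by multiplying (i) by $G^2(t)$ and using again $|u|_{\gamma}^2 = G^2(t)\,y(t)$.

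For the final equivalence, note that $|u_0|_{H^1}^2 > 0$ is a fixed constant, so $\lim_{t \to +\infty} |u(t)|_{H^1}^2 = 0$ holds if and only if $e^{-2\int_0^t G^2(s)\,ds} \to 0$ as $t \to +\infty$; since $t \mapsto \int_0^t G^2(s)\,ds$ is nondecreasing, this limit exists in $[0,+\infty]$, and the exponential tends to $0$ precisely when $\int_0^t G^2(s)\,ds \to +\infty$, i.e. precisely when $t \mapsto G(t) \notin L_t^2(0,+\infty)$.

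There is essentially no obstacle beyond bookkeeping: the two points needing a word of care are (a) the strict positivity of $y(t)$, which makes the division in the definition of $G$ and the passage to $\log y$ legitimate, handled above via uniqueness; and (b) the regularity needed to differentiate $|u|_{H^1}^2$ and to invoke the energy identity, which is covered by the standing hypotheses ($u$ valued in $\Hg \cap H^1(\T)$ and $G \in \mathscr{C}^1$) together with the identity $\frac{1}{2}\frac{d}{dt}|u|_{H^1}^2 + |u|_{\gamma}^2 = 0$ already established.
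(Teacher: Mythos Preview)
Your proof is correct and follows essentially the same route as the paper: derive the energy identity $\tfrac{1}{2}\frac{d}{dt}|u|_{H^1}^2 + |u|_\gamma^2 = 0$, substitute $|u|_\gamma^2 = G^2(t)|u|_{H^1}^2$, integrate the resulting linear ODE, then read off (ii) and the $L^2_t$ equivalence. You are slightly more careful than the paper in justifying $y(t)>0$ via uniqueness and in noting the regularity needed, but these are only bookkeeping refinements of the same argument.
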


\begin{proof}
	As previously, we take the scalar product in $L^2(\T)$ of the equation dBBM with $u$ and we obtain
	\[ \frac{1}{2} \frac{d}{dt}|u|_{H^1}^2 + |u|_{\gamma}^2 = 0. \]
	Since $|u|_{\gamma}^2 = G^2(t) |u|_{H^1}^2$, we have
	\[ \frac{1}{2} \frac{d}{dt}|u|_{H^1}^2 + G^2(t) |u|_{H^1}^2 = 0 \mbox{~thus~} |u(t)|_{H^1}^2 = e^{-2\int\limits_{0}^{t} G^2(s)ds} |u_0|_{H^1}^2.\]
	We find $\lim\limits_{t \rightarrow + \infty} |u|_{H^1}^2 = 0$ if and only if $t \mapsto G(t) \notin L_t^2(0,+\infty)$.
	
	Then, we have 
	\[ |u(t)|_{\gamma}^2 = G^2 (t) e^{-2\int\limits_{0}^{t} G^2(s)ds} |u_0|_{H^1}^2. \]
	The proposition \ref{prop4} involves that $\lim\limits_{t \rightarrow + \infty} |u|_{\gamma} = 0$ and consequently
	\[ \lim\limits_{t \rightarrow + \infty} G^2 (t) e^{-2\int\limits_{0}^{t} G^2(s)ds} |u_0|_{H^1}^2 = 0. \]
\end{proof}

\begin{remark}
	We can establish similar results for the forced equation
	\[ u_t - u_{txx} + u_x + uu_x + \mathscr{L}_{\gamma}(u) = f. \]
	\begin{Prop}
		Assume that $f \in H_{1/\gamma}(\T)$ and $u_0 \in H^1(\T) \cap H_{\gamma}(\T)$. Then the solution $u$ of the forced equation satisfies:
		\[ |u(t)|_{H^1}^2 \leq e^{-\int\limits_{0}^{t} G^2(s)ds} |u_0|_{H^1}^2 + \int\limits_{0}^{L} e^{-\int\limits_{s}^{t} G^2(\tau)d\tau} |f|_{1/\gamma}^2 ds. \]	
	\end{Prop}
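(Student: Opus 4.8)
The plan is to rerun the energy argument that produced the previous proposition, the only new ingredient being the treatment of the source term $f$ via the duality between $H_\gamma(\T)$ and $H_{1/\gamma}(\T)$, after which a one-line Gronwall computation finishes everything.

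First I would take the scalar product in $L^2(\T)$ of the forced equation with $u$. Exactly as in the computation preceding the definition \eqref{fctG}, the terms $u_x$ and $uu_x$ contribute nothing (each is a spatial total derivative and integrates to zero over $\T$), the pair $u_t$ and $-u_{txx}$ produces $\tfrac12\frac{d}{dt}|u|_{H^1}^2$, and $\mathscr{L}_\gamma(u)$ produces $|u|_\gamma^2$, so that
\[ \tfrac12\frac{d}{dt}|u|_{H^1}^2 + |u|_\gamma^2 = \int_\T f\,u\,dx. \]

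Next I would bound the right-hand side. Passing to Fourier coefficients and inserting the factor $\gamma_k^{1/2}\gamma_k^{-1/2}$, the Cauchy--Schwarz inequality gives
\[ \left| \int_\T f\,u\,dx \right| = \left| \sum_{k\in\Z} \hat f_k\,\overline{\hat u_k} \right| \leq \left( \sum_{k\in\Z}\frac{|\hat f_k|^2}{\gamma_k} \right)^{\!1/2}\!\left( \sum_{k\in\Z}\gamma_k|\hat u_k|^2 \right)^{\!1/2} = |f|_{1/\gamma}\,|u|_\gamma, \]
which is finite thanks to the hypothesis $f\in H_{1/\gamma}(\T)$. A Young inequality $|f|_{1/\gamma}|u|_\gamma \leq \tfrac12|u|_\gamma^2 + \tfrac12|f|_{1/\gamma}^2$ then lets me absorb $\tfrac12|u|_\gamma^2$ into the left-hand side, and after multiplying by $2$ I obtain $\frac{d}{dt}|u|_{H^1}^2 + |u|_\gamma^2 \leq |f|_{1/\gamma}^2$. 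Using $|u|_\gamma^2 = G^2(t)|u|_{H^1}^2$ this is the linear differential inequality $\frac{d}{dt}|u|_{H^1}^2 + G^2(t)|u|_{H^1}^2 \leq |f(t)|_{1/\gamma}^2$.

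Finally I would integrate. Multiplying by the integrating factor $e^{\int_0^t G^2(s)\,ds}$ turns the inequality into $\frac{d}{dt}\big( e^{\int_0^t G^2(s)ds}\,|u(t)|_{H^1}^2 \big) \leq e^{\int_0^t G^2(s)ds}\,|f(t)|_{1/\gamma}^2$; integrating from $0$ to $t$ and dividing by $e^{\int_0^t G^2(s)ds}$ yields precisely
\[ |u(t)|_{H^1}^2 \leq e^{-\int_0^t G^2(s)ds}\,|u_0|_{H^1}^2 + \int_0^t e^{-\int_s^t G^2(\tau)d\tau}\,|f(s)|_{1/\gamma}^2\,ds, \]
which is the asserted estimate (so the upper limit in the displayed bound should read $\int_0^t$ rather than $\int_0^L$, with $|f(s)|_{1/\gamma}^2$ under the integral sign). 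There is no genuine obstacle; the only points requiring care are that $u$ be regular enough — valued, say, in $\Hg\cap H^2(\T)$ — for the integrations by parts in the energy identity to be legitimate, and that $G$ be $\mathscr{C}^1$ in time so that the integrating factor is meaningful, both of which are assumed here just as in the preceding proposition.
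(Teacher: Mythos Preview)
Your proof is correct and follows essentially the same route as the paper: energy identity, duality bound $|\langle f,u\rangle|\le |f|_{1/\gamma}|u|_\gamma$, Young's inequality to absorb half of $|u|_\gamma^2$, substitution $|u|_\gamma^2=G^2(t)|u|_{H^1}^2$, and Gronwall. Your observation that the upper limit of the last integral should read $t$ rather than $L$ is also correct --- this is a typo in the paper's statement.
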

	
	\begin{proof}
		The scalar product of dBBM with $u$ gives
		\[ \frac{1}{2} \frac{d}{dt}|u|_{H^1}^2 + |u|_{\gamma}^2 = <f,u>. \]
		But
		\[ | \langle f,u \rangle | \leq |f|_{1/\gamma} |u|_{\gamma} \quad \mbox{~and~} \quad |u|_{\gamma}^2 = G^2(t) |u|_{H^1}^2.\]
		From the Young inequality, we have
		\[ \frac{1}{2} \frac{d}{dt}|u|_{H^1}^2 + G^2(t)|u|_{H^1}^2 \leq \frac{1}{2} |f|_{1/\gamma}^2 + \frac{1}{2} |u|_{\gamma}^2. \]
		It involves that 
		\[ \frac{d}{dt}|u|_{H^1}^2 + G^2(t)|u|_{H^1}^2 \leq |f|_{1/\gamma}^2. \]
		Using the Gronwall's lemma, we obtain
		\[ |u(t)|_{H^1}^2(\T) \leq e^{-\int\limits_{0}^{t} G^2(s)ds} |u_0|_{H^1}^2 + \int\limits_{0}^{L} e^{-\int\limits_{s}^{t} G^2(\tau)d\tau} |f|_{1/\gamma}^2 ds. \]	
		Multiplying each term with $G^2(t)$, it gets
		\[ |u(t)|_{\gamma}^2(\T) \leq G^2(t) e^{-\int\limits_{0}^{t} G^2(s)ds} |u_0|_{H^1}^2 + \int\limits_{0}^{L} e^{-\int\limits_{s}^{t} G^2(\tau)d\tau} G^2(t) |f|_{1/\gamma}^2 ds. \]	 
	\end{proof}
\end{remark}

\section{Numerical schemes}
	In this part, we study and present some numerical schemes well suited for the long-time behavior of the solution of dBBM equation. We also give details on their implementation
	\subsection{Numerical schemes}
	Let start with time discretization. We denote by $D$ the operator of derivation in space, by $D^2$ the operator of second derivation in space and by $u^{n}$ the approximation of $u$ at time $t_n=n\Delta t$.

\hspace{0.5cm}

We first introduce the fully bacward Euler method.
We have the following discretization:
\begin{equation}\label{EI}
	(Id-D^2) \frac{u^{n+1}-u^n}{\Delta t} + D u^{n+1} + \frac{1}{2} D \left( (u^{n+1})^2 \right) + \mathscr{L}_{\gamma} u^{n+1}= f.
\end{equation}	

\begin{Prop}\label{prop3_1}
	If $u^0 = u_0 \in H^1(\T)$ and $ f \in H_{1/\gamma}(\T) $ then the sequence $(u^n)_{n \in \N}$ generated by the fully backward Euler method is well defined, in $H^1(\T)$ and 
	\[ |u^{n+1}|_{H^1}^2 + |u^{n+1}-u^n|_{H^1}^{2} + \Delta t \sum\limits_{k \in \mathbb{Z}} \gamma_k |\hat{u}_k^{n+1}|^2 \leq |u^n|_{H^1}^2 + \Delta t \sum\limits_{k \in \mathbb{Z}} \frac{1}{\gamma_k} |\hat{f}_k|^2. \]	
	Moreover, if $f=0$ then $\lim\limits_{n \rightarrow + \infty} |u^n|_{H^1}=0$.
\end{Prop}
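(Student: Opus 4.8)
The plan is to argue one time step at a time: given $u^n\in H^1(\T)$, first solve the implicit step \eqref{EI} for $u^{n+1}\in H^1(\T)$ (existence and uniqueness, so that the sequence is well defined), then obtain the energy inequality by testing \eqref{EI} against $u^{n+1}$, and finally read off the decay when $f=0$. The routine part is the energy identity; the real work is in the solvability of the nonlinear implicit elliptic step (handled via Galerkin, an a priori bound, and the one–dimensional compact embedding $H^1\hookrightarrow L^\infty$) and, for the last assertion, in passing from decay of the Fourier modes to decay of the full $H^1$–norm, which is the same delicate point already met in Proposition~\ref{prop4}.

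\emph{Solvability of one step.} With $w:=u^{n+1}$ unknown, \eqref{EI} reads $(Id-D^2)w+\Delta t\,Dw+\tfrac{\Delta t}{2}D(w^2)+\Delta t\,\mathscr{L}_\gamma w=(Id-D^2)u^n+\Delta t\,f$. I would solve this by a Galerkin method: let $P_N$ be the $L^2(\T)$–orthogonal projection onto the modes $|k|\le N$ and seek $w_N\in\mathrm{Im}\,P_N$ solving the $P_N$–projected equation. Testing against $w_N$, the transport term $\int w_N\partial_x w_N$ and the nonlinear term $\tfrac12\int w_N\partial_x(w_N^2)=\tfrac13\int\partial_x(w_N^3)$ vanish by periodicity, while $(\mathscr{L}_\gamma w_N,w_N)=|w_N|_\gamma^2\ge0$; Cauchy–Schwarz (with weights $1$, $1+k^2$ and $\gamma_k^{\pm1/2}$) and Young then give the uniform bound $|w_N|_{H^1}^2+\Delta t\,|w_N|_\gamma^2\le\|u^n\|_{H^1}^2+\Delta t\,|f|_{1/\gamma}^2$. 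Brouwer's theorem (in the form: a continuous field pointing outward on a sphere vanishes inside) yields a solution $w_N$, and the uniform $H^1$–bound together with the compact embedding $H^1(\T)\hookrightarrow L^\infty(\T)$ (so that $w_N^2\to w^2$ in $L^2$) lets one pass to the limit against trigonometric polynomials and obtain $w=u^{n+1}\in H^1(\T)$.

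\emph{Uniqueness.} If $w_1,w_2$ both solve the step, set $e=w_1-w_2$, $s=w_1+w_2$; subtracting and testing against $e$ gives $|e|_{H^1}^2+\Delta t\,|e|_\gamma^2+\tfrac{\Delta t}{2}\int\partial_x(es)\,e=0$, and $\int\partial_x(es)\,e=\tfrac12\int(\partial_x s)\,e^2$ after an integration by parts. Estimating the last term by $\tfrac12\|\partial_x s\|_{L^2}\|e\|_{L^4}^2$ and using the one–dimensional Gagliardo–Nirenberg inequality $\|e\|_{L^4}^2\le C\|e\|_{L^2}^{3/2}|e|_{H^1}^{1/2}$ with Young, one gets $|e|_{H^1}^2\big(1-C\Delta t\|s\|_{H^1}\big)\le0$; since $\|s\|_{H^1}$ is controlled by the data through the bound of the previous step, $e=0$ once $\Delta t$ is small relative to the data, which is the regime of interest. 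Hence $(u^n)_{n\in\N}$ is well defined in $H^1(\T)$. (I expect this conditional uniqueness, rather than an unconditional one, to be the point requiring the most care.)

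\emph{Energy inequality and decay.} Take the $L^2(\T)$ inner product of \eqref{EI} with $u^{n+1}$: the first term equals $\tfrac{1}{2\Delta t}\big(|u^{n+1}|_{H^1}^2-|u^n|_{H^1}^2+|u^{n+1}-u^n|_{H^1}^2\big)$ by $2(a-b,a)=|a|^2-|b|^2+|a-b|^2$ in the $H^1$ inner product, the terms from $Du^{n+1}$ and $\tfrac12D((u^{n+1})^2)$ vanish by periodicity, the damping term contributes $|u^{n+1}|_\gamma^2=\sum_{k\in\Z}\gamma_k|\hat u^{n+1}_k|^2$, and $|(f,u^{n+1})|\le|f|_{1/\gamma}|u^{n+1}|_\gamma\le\tfrac12|f|_{1/\gamma}^2+\tfrac12|u^{n+1}|_\gamma^2$; multiplying by $2\Delta t$ and absorbing $\Delta t\,|u^{n+1}|_\gamma^2$ gives exactly the stated inequality. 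When $f=0$ it becomes $|u^{n+1}|_{H^1}^2+\Delta t\,|u^{n+1}|_\gamma^2\le|u^n|_{H^1}^2$, so $(|u^n|_{H^1}^2)_n$ is nonincreasing and nonnegative, hence convergent, and telescoping gives $\Delta t\sum_{n\ge1}|u^n|_\gamma^2\le|u_0|_{H^1}^2<\infty$, whence $|u^n|_\gamma\to0$ and $\hat u^n_k\to0$ for each fixed $k$; that $\lim_n|u^n|_{H^1}^2=0$ then follows as in the continuous case (Proposition~\ref{prop4}).
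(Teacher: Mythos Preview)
Your derivation of the energy inequality and of the decay when $f=0$ follows the paper's proof essentially verbatim: both test \eqref{EI} against $u^{n+1}$, use the polarisation identity $2\langle a-b,a\rangle_{H^1}=|a|_{H^1}^2-|b|_{H^1}^2+|a-b|_{H^1}^2$, drop the transport and nonlinear terms by periodicity, and handle $\langle f,u^{n+1}\rangle$ by the weighted Cauchy--Schwarz/Young inequality with weights $\gamma_k^{\pm1/2}$; for $f=0$ both conclude that $|u^{n}|_\gamma^2\to0$, hence $\hat u_k^n\to0$ for each $k$ (since $\gamma_k>0$), and then invoke the same argument as in Proposition~\ref{prop4} to get $C=0$. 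Your telescoping formulation $\Delta t\sum_{n\ge1}|u^n|_\gamma^2\le|u_0|_{H^1}^2$ is a slightly cleaner way than the paper's to justify $|u^n|_\gamma\to0$, but the substance is identical.

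Where you genuinely add to the paper is the solvability of the implicit step. The paper's proof simply does not address the claim that the sequence is ``well defined''; it takes $u^{n+1}$ for granted and only establishes the a priori bound. Your Galerkin/Brouwer argument for existence (with the same a priori estimate $|w_N|_{H^1}^2+\Delta t\,|w_N|_\gamma^2\le|u^n|_{H^1}^2+\Delta t\,|f|_{1/\gamma}^2$ and compactness of $H^1(\T)\hookrightarrow L^\infty(\T)$ to pass to the limit in the quadratic term) is standard and correct, and your uniqueness argument is fine, with the honest caveat that it only yields uniqueness under a smallness condition $\Delta t\le C/\|s\|_{H^1}$; this is indeed the delicate point, and the paper does not improve on it (it says nothing). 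So: same route for the inequality and the decay, but you fill a gap the paper leaves open.
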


\begin{proof}
	We use the following identity
	\[ \langle (Id-D^2)(u^{n+1}-u^n),u^{n+1} \rangle = - \frac{1}{2} \left( |u^n|_{H^1}^2 - |u^{n+1}-u^n|_{H^1}^2  - |u^{n+1}|_{H^1}^2 \right).\]
	We compute the scalar product of (\ref{EI}) with $u^{n+1}$ and we obtain
	\begin{multline*}
		|u^{n+1}|_{H^1}^2 + |u^{n+1}-u^n|_{H^1}^2 - |u^n|_{H^1}^2 + 2\Delta t \langle \mathscr{L}_{\gamma}u^{n+1},u^{n+1} \rangle \\
		= 2 \Delta t \langle u^{n+1},f \rangle, 
	\end{multline*}	
	where
	\[ \langle \mathscr{L}_{\gamma}u^{n+1},u^{n+1} \rangle = \sum\limits_{k\in \mathbb{Z}} \gamma_k |\hat{u}_k^{n+1}|^2 \geq 0.\]
	 Hence we have
	\[ |u^{n+1}-u^n|_{H^1}^2  + |u^{n+1}|_{H^1}^2 + 2\Delta t \sum\limits_{k\in \mathbb{Z}} \gamma_k |\hat{u}_k^{n+1}|^2 = |u^n|_{H^1}^2 + \langle u^{n+1},f \rangle . \]
	The Young inequality on the right hand side provides
	\[ \langle u^{n+1},f \rangle \leq \frac{1}{2} \sum\limits_{k\in \mathbb{Z}} \gamma_k |\hat{u}_k^{n+1}|^2 + \frac{1}{2} \sum\limits_{k\in \mathbb{Z}} \frac{1}{\gamma_k} |\hat{f}_k|^2 \]
	We infer the expected result
	\[ |u^{n+1}|_{H^1}^2 + |u^{n+1}-u^n|_{H^1}^2 + \Delta t |u^{n+1}|_{\gamma}^2 \leq |u^n|_{H^1}^2 + \Delta t \sum\limits_{k\in \mathbb{Z}} \frac{1}{\gamma_k} |\hat{f}_k|^2 \]
	If $f=0$, we directly have the equality
	\[ |u^{n+1}|_{H^1}^2 + |u^{n+1}-u^n|_{H^1}^2 + \Delta t |u^{n+1}|_{\gamma}^2 = |u^n|_{H^1}^2  \]	
	We deduce that the sequence $\left( |u^n|_{H^1} \right)_{n \in \N}$ is decreasing and lower bounded by $0$. Then it is convergent to a positive constant $C$. It follows that
	\[ \lim\limits_{n \rightarrow + \infty} \sum\limits_{k \in \mathbb{Z}} \gamma_k |\hat{u}_k^n|^2 = 0. \]
	As $\gamma_k > 0$, we have $\lim\limits_{n \rightarrow +\infty} \hat{u}_k^n = 0$ and consequently $C=0$.
\end{proof}

\begin{Prop}\label{prop3_2}
	Let $(u^n)_{n \in \N}$ be the sequence generated by the fully backward Euler method. Assume that $f=0$. We set $G^{(n)}=\frac{|u^n|_{\gamma}}{|u^n|_{H^1}}$. Then we have
	\[ |u^n|_{H^1}^2 \leq \left( \prod\limits_{j=1}^{n} \frac{1}{1+2\Delta t (G^{(j)})^2} \right)|u_0|_{H^1}^2. \]
	Moreover, if $\Delta t$ is small enough and if $(G^{(j)})_{j \in \mathbb{Z}} \notin l^2$ then $\lim\limits_{n \rightarrow + \infty} |u^n|_{H^1}=0$.
\end{Prop}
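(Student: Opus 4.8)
The plan is to mimic, at the discrete level, the continuous computation $\frac{d}{dt}|u|_{H^1}^2=-2|u|_\gamma^2=-2G^2(t)|u|_{H^1}^2$ carried out just before this proposition, using the implicit Euler energy balance in place of the ODE. Taking the $L^2(\T)$ scalar product of \eqref{EI} (with $f=0$) with $u^{n+1}$, applying the algebraic identity for $\langle(Id-D^2)(u^{n+1}-u^n),u^{n+1}\rangle$ recalled in the proof of Proposition~\ref{prop3_1}, and using $\langle Du^{n+1},u^{n+1}\rangle=0$ and $\langle D((u^{n+1})^2),u^{n+1}\rangle=0$ (integration by parts on $\T$), one obtains the exact balance
\[ |u^{n+1}|_{H^1}^2+|u^{n+1}-u^n|_{H^1}^2+2\Delta t\,|u^{n+1}|_\gamma^2=|u^n|_{H^1}^2 . \]
Dropping the nonnegative term $|u^{n+1}-u^n|_{H^1}^2$ and, when $u^{n+1}\neq0$, inserting $|u^{n+1}|_\gamma^2=(G^{(n+1)})^2|u^{n+1}|_{H^1}^2$ yields $\bigl(1+2\Delta t\,(G^{(n+1)})^2\bigr)|u^{n+1}|_{H^1}^2\le|u^n|_{H^1}^2$. (If $u^{n_0}=0$ for some $n_0$, then $u^n=0$ for all $n\ge n_0$ by the uniqueness part of Proposition~\ref{prop3_1}, and the stated estimate holds trivially from that index on.) A one-line induction on $n$, multiplying these step inequalities, then gives the product bound.

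For the second statement I would pass to logarithms: since $|u^n|_{H^1}^2\le|u_0|_{H^1}^2\exp\bigl(-\sum_{j=1}^n\log(1+2\Delta t\,(G^{(j)})^2)\bigr)$, it suffices to prove that the series $\sum_{j\ge1}\log(1+2\Delta t\,(G^{(j)})^2)$ diverges. Set $a_j=(G^{(j)})^2\ge0$; the hypothesis $(G^{(j)})_j\notin l^2$ means exactly $\sum_j a_j=+\infty$. If $a_j\not\to0$, then $a_j\ge c>0$ for infinitely many $j$, so infinitely many terms of the series are bounded below by $\log(1+2\Delta t\,c)>0$ and the series diverges. If $a_j\to0$, then for $j$ large enough $2\Delta t\,a_j\le1$, whence $\log(1+2\Delta t\,a_j)\ge\Delta t\,a_j$ (using $\log(1+x)\ge x/2$ on $[0,1]$), and divergence follows from $\sum_j a_j=+\infty$. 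Either way the product tends to $0$. The smallness of $\Delta t$ is needed only to ensure the scheme is well defined (Proposition~\ref{prop3_1}); when moreover $\gamma_k\le C(1+k^2)$ for all $k$, the ratios $G^{(j)}$ are uniformly bounded by some $\sqrt\Gamma$, and then $\log(1+2\Delta t\,a_j)\ge\frac{2\Delta t}{1+2\Delta t\,\Gamma}\,a_j$ for every $j$, which makes the divergence immediate.

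I do not expect a conceptual obstacle: the argument is the exact discrete counterpart of the already proved continuous identity $|u(t)|_{H^1}^2=e^{-2\int_0^tG^2}|u_0|_{H^1}^2$, with $\prod_{j}(1+2\Delta t\,(G^{(j)})^2)^{-1}$ replacing the exponential. The only points requiring a little care are: bookkeeping the factor $2\Delta t$ through the energy balance (the same balance already used for Proposition~\ref{prop3_1}, specialised to $f=0$); justifying the substitution $|u^{n+1}|_\gamma^2=(G^{(n+1)})^2|u^{n+1}|_{H^1}^2$ by treating the vanishing case separately; and the elementary case split in the divergence of the logarithmic series, since $(G^{(j)})_j$ need not tend to $0$.
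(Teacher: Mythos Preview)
Your argument is correct and follows the same route as the paper: take the scalar product of \eqref{EI} with $u^{n+1}$, obtain the energy balance, substitute $|u^{n+1}|_\gamma^2=(G^{(n+1)})^2|u^{n+1}|_{H^1}^2$, and iterate. For the second part the paper simply writes $\sum_j\log(1+2\Delta t\,(G^{(j)})^2)\simeq 2\Delta t\sum_j(G^{(j)})^2$ under a smallness assumption on $\Delta t$, whereas your case split (according to whether $a_j\to0$) makes the divergence rigorous without that heuristic; your handling of the degenerate case $u^{n_0}=0$ is also a refinement absent from the paper.
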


\begin{proof}
	We compute the scalar product of (\ref{EI}) with $u^{n+1}$ and we obtain
	\[ \frac{1}{2\Delta t} \left( |u^{n+1}|_{H^1}^2 - |u^n|_{H^1}^2 + |u^{n+1}-u^n|_{H^1}^2 \right) + (G^{(n+1)})^2 |u^{n+1}|_{H^1}^2 = 0.\]
	Hence
	\[ \left( 1+2\Delta t (G^{(n+1)})^2 \right) |u^{n+1}|_{H^1}^2 + |u^{n+1}-u^n|_{H^1}^2 \leq |u^n|_{H^1}^2. \]
	Therefore
	\[ |u^{n+1}|_{H^1}^2 \leq \frac{|u^n|_{H^1}^2}{ 1+2\Delta t (G^{(n+1)})^2}. \]
	We infer by induction that
	\[ |u^n|_{H^1}^2 \leq \left( \prod\limits_{j=1}^{n} \frac{1}{1+2\Delta t (G^{(j)})^2} \right)|u_0|_{H^1}^2. \]
	Now if $\Delta t$ is small enough, we have
	\[ \log \left( \prod\limits_{j=1}^{n} \frac{1}{1+2\Delta t (G^{(j)})^2} \right) = -\sum\limits_{j=1}^{n}\log(1+2\Delta t (G^{(j)})^2) \simeq -2\Delta t \sum\limits_{j=1}^{n} (G^{(j)})^2.\]
	If $(G^{(j)}) \notin l^2$ then $\lim\limits_{n \rightarrow + \infty} |u^n|_{H^1}=0$.
\end{proof}

\begin{remark}
	The forward Euler method can be applied to the BBM equation, contrarily to the KdV equation. Moreover, it is more difficult to establish qualitative properties as Propositions \ref{prop3_1} and \ref{prop3_2}. The scheme is written as
	\begin{equation*}
		(Id-D^2) \frac{u^{n+1}-u^n}{\Delta t} + D u^{n} + \frac{1}{2} D \left( (u^{n})^2 \right) + \mathscr{L}_{\gamma} u^{n}= f.
	\end{equation*}	
\end{remark}

Let introduce the Sanz-Serna scheme, given by
\begin{multline}\label{SS}
	(Id-D^2)\frac{u^{n+1}-u^n}{\Delta t} + D \left( \frac{u^{n+1}+u^n}{2} \right) + \frac{1}{2} D \left( \left( \frac{u^{n+1} + u^n}{2} \right)^2 \right) \\
	+ \mathscr{L}_{\gamma} \left( \frac{u^{n+1}+u^n}{2} \right) = f.
\end{multline}

\begin{Prop}
	Assume that $u_0 \in H^1(\T)$ and $f \in L^2 \cap H_{1/\gamma}(\T)$. Then the scheme (\ref{SS}) is stable in $H^1(\T)$ for all $\Delta t > 0$.
\end{Prop}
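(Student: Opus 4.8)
The plan is to take the $L^2(\T)$ scalar product of \eqref{SS} with the natural test function $u^{n+1}+u^n$ and track the discrete energy $|u^n|_{H^1}^2$. First I would recall the algebraic identity $\langle (Id-D^2)(u^{n+1}-u^n),\,u^{n+1}+u^n\rangle = |u^{n+1}|_{H^1}^2 - |u^n|_{H^1}^2$, which turns the time-derivative term into a clean telescoping difference; this is the reason the midpoint (Sanz–Serna) discretization is chosen. I would then examine the remaining three terms one at a time. The linear dispersive term $\langle D\big(\tfrac{u^{n+1}+u^n}{2}\big),\,u^{n+1}+u^n\rangle$ vanishes because $D$ is skew-adjoint on $\T$ (equivalently $\widehat{Du}_k = ik\hat u_k$ is purely imaginary times $\hat u_k$, so the corresponding quadratic form is zero). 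The damping term $\langle \mathscr{L}_\gamma\big(\tfrac{u^{n+1}+u^n}{2}\big),\,u^{n+1}+u^n\rangle = 2\sum_{k\in\Z}\gamma_k\big|\tfrac{\hat u_k^{n+1}+\hat u_k^n}{2}\big|^2 \geq 0$ by the standing hypothesis $\gamma_k\geq 0$, hence contributes with a favorable sign and can be dropped when proving an inequality.

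The nonlinear term is the crux: I must show that $\big\langle D\big((\tfrac{u^{n+1}+u^n}{2})^2\big),\,u^{n+1}+u^n\big\rangle = 0$. Writing $w = \tfrac{u^{n+1}+u^n}{2}$, this is $\langle D(w^2), 2w\rangle = 2\int_\T (w^2)_x\, w\,dx = \tfrac23\int_\T (w^3)_x\,dx = 0$ by periodicity. This is precisely the feature that makes the Sanz–Serna scheme energy-conserving for the advective nonlinearity, and it is the step one must handle with care (it relies on the specific way the nonlinearity is discretized as $\tfrac12 D((w)^2)$ rather than, say, $w\,Dw$). Finally, the forcing term is controlled by Cauchy–Schwarz and Young's inequality: $|\langle f,\, u^{n+1}+u^n\rangle| \leq |f|_{1/\gamma}\,|u^{n+1}+u^n|_\gamma \leq \tfrac12|f|_{1/\gamma}^2 + \tfrac12|u^{n+1}+u^n|_\gamma^2$, and the last term is absorbed by the nonnegative damping contribution computed above.

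Putting these together, multiplying through by $\Delta t$, yields the estimate
\[
|u^{n+1}|_{H^1}^2 \;\leq\; |u^n|_{H^1}^2 + \tfrac{\Delta t}{2}\,|f|_{1/\gamma}^2,
\]
valid for every $\Delta t>0$ with no step-size restriction, from which $|u^n|_{H^1}^2 \leq |u_0|_{H^1}^2 + \tfrac{n\Delta t}{2}|f|_{1/\gamma}^2$ follows by summation; in particular the scheme is unconditionally stable in $H^1(\T)$, and conservative when $f=0$. I expect the main obstacle to be purely presentational — justifying the vanishing of the nonlinear contribution cleanly (the $\int_\T (w^3)_x = 0$ identity) and making sure the forcing term is absorbed by the correct nonnegative quantity rather than requiring an extra Gronwall step; everything else is routine Fourier-side bookkeeping using $\gamma_k\geq 0$.
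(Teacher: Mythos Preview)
Your proposal is correct and follows essentially the same approach as the paper: test \eqref{SS} against the midpoint $u^{n+1}+u^n$ (the paper uses $\tfrac{u^{n+1}+u^n}{2}$, a trivial rescaling), use the telescoping $H^1$ identity, the skew-adjointness of $D$, the vanishing of $\int_\T (w^3)_x\,dx$ for $w=\tfrac{u^{n+1}+u^n}{2}$, and absorb the forcing via the $|\cdot|_{1/\gamma}$--$|\cdot|_\gamma$ duality plus Young. The only cosmetic difference is that the paper keeps a leftover nonnegative term $\tfrac{\Delta t}{4}|u^{n+1}+u^n|_\gamma^2$ on the left (yielding $|u^{n+1}|_{H^1}^2+\tfrac{\Delta t}{4}|u^{n+1}+u^n|_\gamma^2 \le |u^n|_{H^1}^2+\Delta t\,|f|_{1/\gamma}^2$), whereas you absorb it entirely and obtain the slightly sharper constant $\tfrac{\Delta t}{2}|f|_{1/\gamma}^2$.
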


\begin{proof}
	We take the scalar product of (\ref{SS}) with $\frac{u^{n+1}+u^n}{2}$. We obtain
	\[ \frac{|u^{n+1}|_{H^1}^2 - |u^n|_{H^1}^2}{2\Delta t} + \frac{1}{4} |u^{n+1} + u^n|_{\gamma}^2 = \left< f , \frac{u^{n+1}+u^n}{2} \right>. \]
	Using the duality and the Young's inequality, it implies
	\[ \frac{|u^{n+1}|_{H^1}^2 - |u^n|_{H^1}^2}{2\Delta t} + \frac{1}{4} |u^{n+1} + u^n|_{\gamma}^2 \leq \frac{1}{2} |f|_{1/\gamma}^2 + \frac{1}{2} \left| \frac{u^{n+1}+u^n}{2} \right|_{\gamma}^2. \]
	Hence
	\[ |u^{n+1}|_{H^1}^2 + \frac{\Delta t}{4} |u^{n+1} + u^n|_{\gamma}^2 \leq |u^n|_{H^1}^2 + \Delta t |f|_{1/\gamma}^2.\]
	Consequently we have the stability on every time interval $[0,T]$.
\end{proof}

\begin{remark}
	The Crank-Nicolson scheme is given by 
\begin{multline}\label{CN}
	(Id-D^2)\frac{u^{n+1}-u^n}{\delta t} + D \left( \frac{u^{n+1}+u^n}{2} \right) + \frac{1}{4} D \left( (u^{n+1})^2 + (u^n)^2 \right) \\
	+ \mathscr{L}_{\gamma} \frac{u^{n+1}+u^n}{2} = f.	
\end{multline}
	We cannot have uniform $H^1-$bounds for $u^n$ from this scheme because
\[ \left< D \left( (u^{n+1})^2 + (u^n)^2 \right) , \frac{u^{n+1}+u^n}{2} \right> \neq 0. \]
\end{remark}

	\subsection{Implementation}
	The Fourier Transform in space is applied to the dBBM equation. The equation \eqref{eqBBM2} becomes
\[ (1+k^2) \frac{d\hat{u}_k}{dt} + (ik + \gamma_k) \hat{u}_k + (\widehat{u u_x})_k = 0. \]
We remind that the term $\mathscr{L}_{\gamma}(u)$ is written in the Fourier space as \[\sum\limits_{k \in \Z} \gamma_k \hat{u}_k e^{i kx}.\]

Then we use the schemes introduced in the previous subsection for the discretization in time and the FFT for the space discretization. These are implicit methods and we apply a fixed-point method. We solve equations which can be written as $u^{n+1}=\Phi(u^n,u^{n+1})$ by Picard iterates.

\begin{algorithm}[H]
\caption{Picard iterates}
\begin{algorithmic}
\STATE $v^0=u^n$, $m=0$
\WHILE{$\| \Phi(u^n,u^m)-u^m \| > \epsilon$} 
	\STATE $v^{m+1}=\Phi(u^n,v^m)$
	\STATE $m=m+1$ 
\ENDWHILE
\STATE $u^{n+1}=v^m$
\end{algorithmic}
\end{algorithm}

Unfortunately, even if the used scheme is unconditionally stable, this method presents instabilities. The stability of the fixed-point iterate can be improved by applying extrapolation like technic as follows (see also \cite{Abo}). The principle is to change the iterate $v^{m+1}=\Phi(u^n,v^m)$ by
\[ v^{m+1}=v^n - (-1)^k \alpha_{m}^{k} \Delta_{\Phi}^{k} v^{m},\]
where $\Delta_{\Phi}^{k} v^{m} = \sum\limits_{j=0}^{k}C_{j}^{k}(-1)^{k-j} \Phi^{(j)}(u^n,v^m)$, $\Phi^{(j)}$ denotes the $j$-th composition of $\Phi$ with itself.
The parameter $\alpha_m^k$ is computed such as minimizing the Euclidean norm of the linearized part of the residual $r^{m+1}=v^{m+1}-\Phi(u^n,v^m)$. We have 
\[ \alpha_m^k = (-1)^k \frac{ \langle \Delta_{\Phi}^1 v^m , \Delta_{\Phi}^{k+1} v^m \rangle}{\langle \Delta_{\Phi}^{k+1} v^m , \Delta_{\Phi}^{k+1} v^m \rangle }. \]
In fact, we take $k=1$, and we have the following algorithm                                                                                                                                                                                                                                                                                                                     

\begin{algorithm}[H]
\caption{Improved fixed-point algorithm}
\begin{algorithmic}
\STATE $v^0=u^n$, $m=0$
\WHILE{$\| \Phi(u^n,v^m)-v^m \| > \epsilon$} 
	\STATE $\Delta_{\Phi}^1 v^m = \Phi(u^n,v^m)-v^m$
	\STATE $\Delta_{\Phi}^2 v^m = \Phi(u^n,\phi(u^n,v^m)) - 2\Phi(u^n,v^m) + v^m$
	\STATE $ \alpha_m^1 = - \frac{\langle\Delta_{\Phi}^1 v^m , \Delta_{\Phi}^{2} v^m \rangle}{\langle\Delta_{\Phi}^{2} v^m , \Delta_{\Phi}^{2} v^m \rangle} $
	\STATE $v^{m+1}=v^m + \alpha_m^1 \Delta_{\Phi}^{1} v^m$
	\STATE $m=m+1$ 
\ENDWHILE
\STATE $u^{n+1}=v^m$
\end{algorithmic}
\end{algorithm}

\section{Numerical results}
	\graphicspath{{pictures/}}


Let us begin by stating the parameters chosen for the simulations. We present the results obtained with the Sanz-Serna scheme for the non-forced equation ($f=0$) on the interval $[-L,L]$, with $L=50$, discretized in $2^{10}$ points. We obtain similar results with fully backaward Euler and Crank-Nicolson schemes. The time step is $\Delta t=0.01$.
Let us remind that $k$ can denote either an integer as an index, or the value $\frac{2\pi k}{L}$.  
\begin{figure}[H]
	\begin{changemargin}{-1.4cm}{0cm}
	\centering
	\includegraphics[scale=0.29]{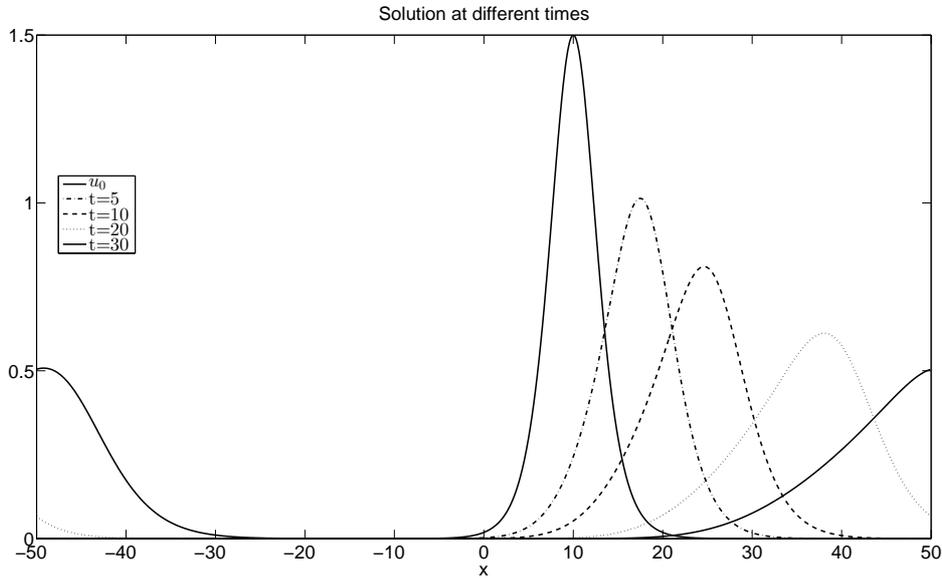} 
	\end{changemargin}	
	\caption{Solution at times $t=0$ (---), $t=5$ ($\cdot-\cdot$), $t=10$ (- - -), $t=20$ ($\cdots$), $t=30$ (---) for $\gamma_k = k^2$. Here the initial datum is the soliton.}
	\label{fig_1}
\end{figure}
We consider the soliton as initial datum
\[ u_0(x)=-3(1-c)\cosh^{-2}\left( \pm \frac{1}{2}\sqrt{\frac{c-1}{c}}(x-d) \right), \]
with $c=1.5$ and $d=0.2L$ .
We begin with a laplacian damping in Figure \ref{fig_1}. The figure shows the effect of the damping on the solution. Since $\gamma_0 = 0$, the solution tends to the mean value of $u_0$.


Figure \ref{fig_2} corresponds to the simulation of the following standard dampings
\begin{itemize}
	\item the constant: $\gamma_k=1, \ \forall k$,
	\item the laplacian: $\gamma_k=k^2, \ \forall k$,
	\item the bilaplacian: $\gamma_k=k^4, \ \forall k$. 
\end{itemize}
\begin{figure}[H]
	\begin{changemargin}{-1.4cm}{0cm}
	\centering
	\includegraphics[scale=0.29]{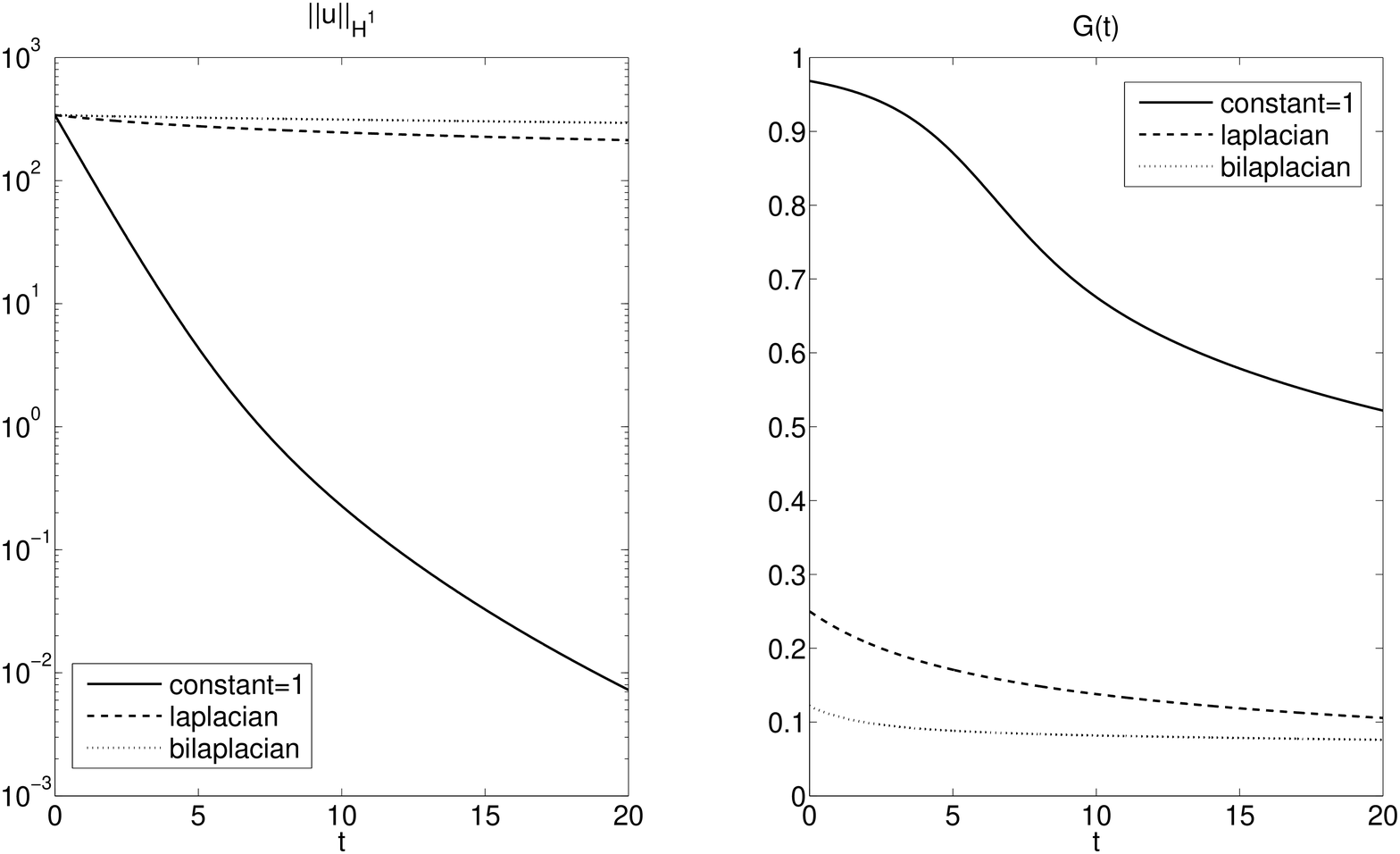} 
	\end{changemargin}                                                                                                                                                                                                                                                                                                                                                                                                                                                                                                                                                                                                                                                                                                                                                                                                                                                                                                                                                                                                                                                                                                                                                                                                                                                                                                                                                                                                                                                                                                                                                                                                                                                                                                                                                                                                                                                                                                                                                                                                                                                                                                                                                                                                                                                                                                                                                         
	\caption{Evolution of $\|u\|_{H^1}$ and $G(t)$ with respect to time for standard dampings, the constant (---), the laplacian (- - -) and the bilaplacian ($\cdots$). Here the initial datum is the soliton.}
	\label{fig_2}
\end{figure}
For these dampings, we verify that the $H^1$-norm decreases as expected. Moreover the solution converge to the norm value of $u_0$ for the laplacian and bilpalacian dampings. We can also classify these dampings. Here the constant one is more efficient than the two other. But the laplacian one is more efficient than the bilaplacian one. These results are in agreement with the ones about the KdV equation \cite{CheSad1, Ghi1, Ghi2, Gou, GouRos, OttSud}.

We extend, in Figure \ref{fig_3}, the simulations to the dampings such that $\gamma_k > 0$ with $\lim\limits_{k \rightarrow +\infty} \gamma_k = 0$.
We consider here three "tend to 0" dampings, with polynomial or exponential decay as follows
\begin{itemize}
	\item $\gamma_k  = \frac{1}{1+|k|}$,
	\item $\gamma_k  = \frac{1}{(1+|k|)^{\alpha}}$, $\alpha=3$
	\item $\gamma_k  = e^{-|k|}$.
\end{itemize}
These dampings do not fit with the asymptions of the theorem \ref{thm_local} but they still damp down the soliton. Indeed we notice that the $H^1$-norm is decreasing. We can classify the dampings. Precisely, the first one damps more than the two other and the third one is more efficient than the second one. Since these dampings tends to 0, their effects are principally on low frequencies. And for low frequencies, we have
\[ \frac{1}{1+|k|} > e^{-|k|} > \frac{1}{(1+|k|)^{\alpha}}. \]
Then the classification of dampings in Figure \ref{fig_3} seems licit.

\begin{figure}[H]
	\begin{changemargin}{-1.4cm}{0cm}
	\centering
	\includegraphics[scale=0.29]{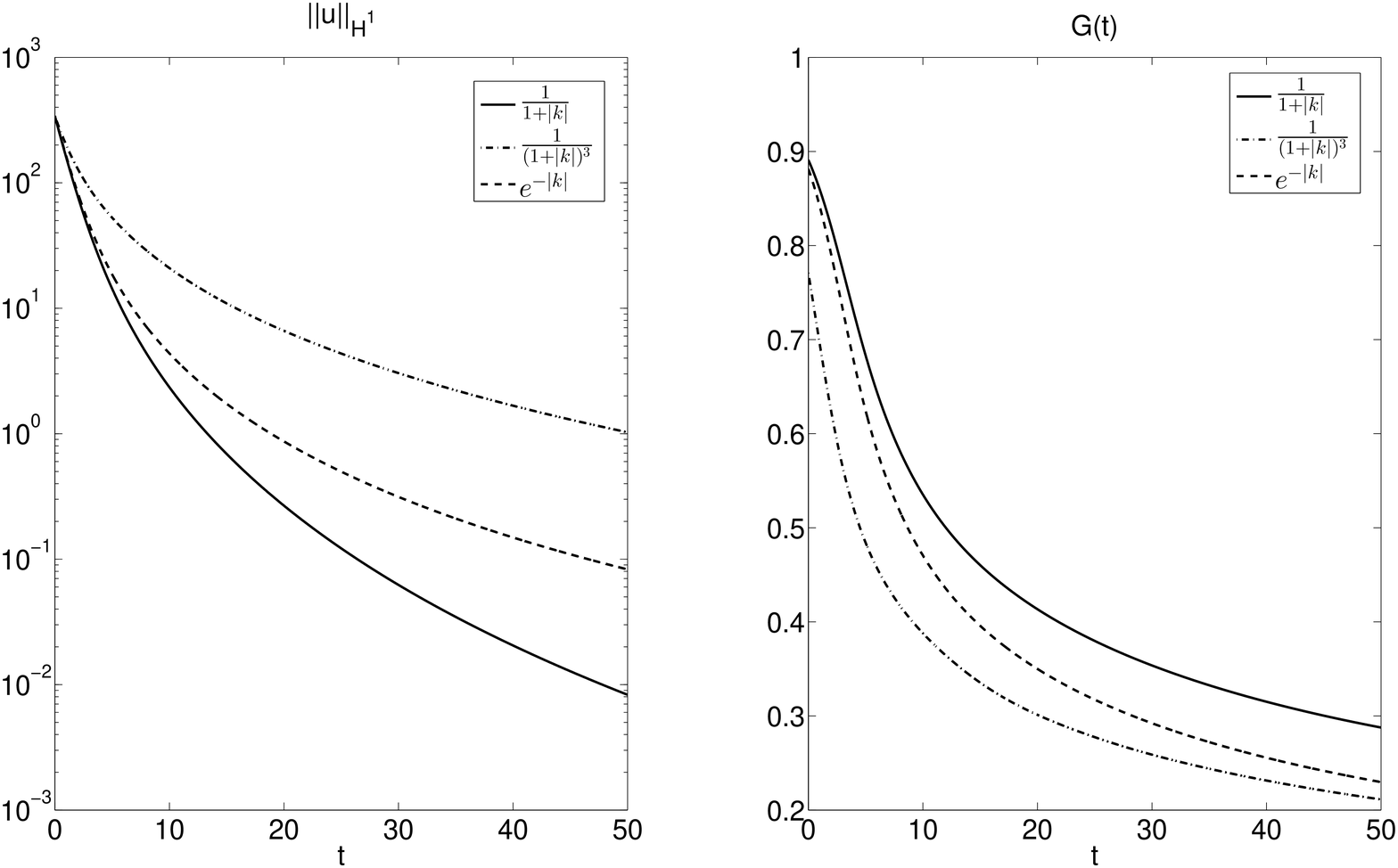} 
	\end{changemargin}
	\caption{Evolution of $\|u\|_{H^1}$ and $G(t)$ with respect to time for "tend to 0" dampings, $\gamma_k  = \frac{1}{1+|k|}$ (---), $\gamma_k  = \frac{1}{(1+|k|)^{3}}$ ($\cdot-\cdot$) and $\gamma_k  = e^{-|k|}$ (- - -). Here the initial datum is the soliton.}
	\label{fig_3}
\end{figure}


We now consider band limited dampings, namely
\begin{equation}\label{eqI}\tag{$I(N)$}
	\gamma_k = \left\{ \begin{array}{l} 1 \mbox{~if~} |k| \leq N \\ 0 \mbox{~if~} |k| > N \end{array} \right. .
\end{equation}	
We define $M := \max (\textbf{k})$, where $\textbf{k}$ is the finite vector of frequencies used for the numerical tests in Figure \ref{fig_4}. 
To keep $\gamma_k >0, \ \forall k$, we choose 
\begin{equation}\label{eqE}\tag{$EI(N)$}
	\gamma_k = \left\{ \begin{array}{l} 1 \mbox{~if~} |k| \leq N \\ e^{-a ||k|-N|} \mbox{~if~} |k| > N \end{array} \right. .
\end{equation}	
or
\begin{equation}\label{eqP}\tag{$PI(N)$}
	\gamma_k = \left\{ \begin{array}{l} 1 \mbox{~if~} |k| \leq N \\ \frac{1}{\left( 1+||k|-N| \right)^a} \mbox{~if~} |k| > N \end{array} \right. .
\end{equation}	
We denote the classic band limited damping by $I(N)$, the one corrected with exponential decay $EI(N)$ and the third one corrected with polynomial decay $PI(N)$. Here we set $a=3$, we obtain $\gamma_k > 0$ with $\lim\limits_{k \rightarrow +\infty} \gamma_k = 0$.

\begin{figure}[H]
	\begin{changemargin}{-1.4cm}{0cm}
	\centering
	\includegraphics[scale=0.29]{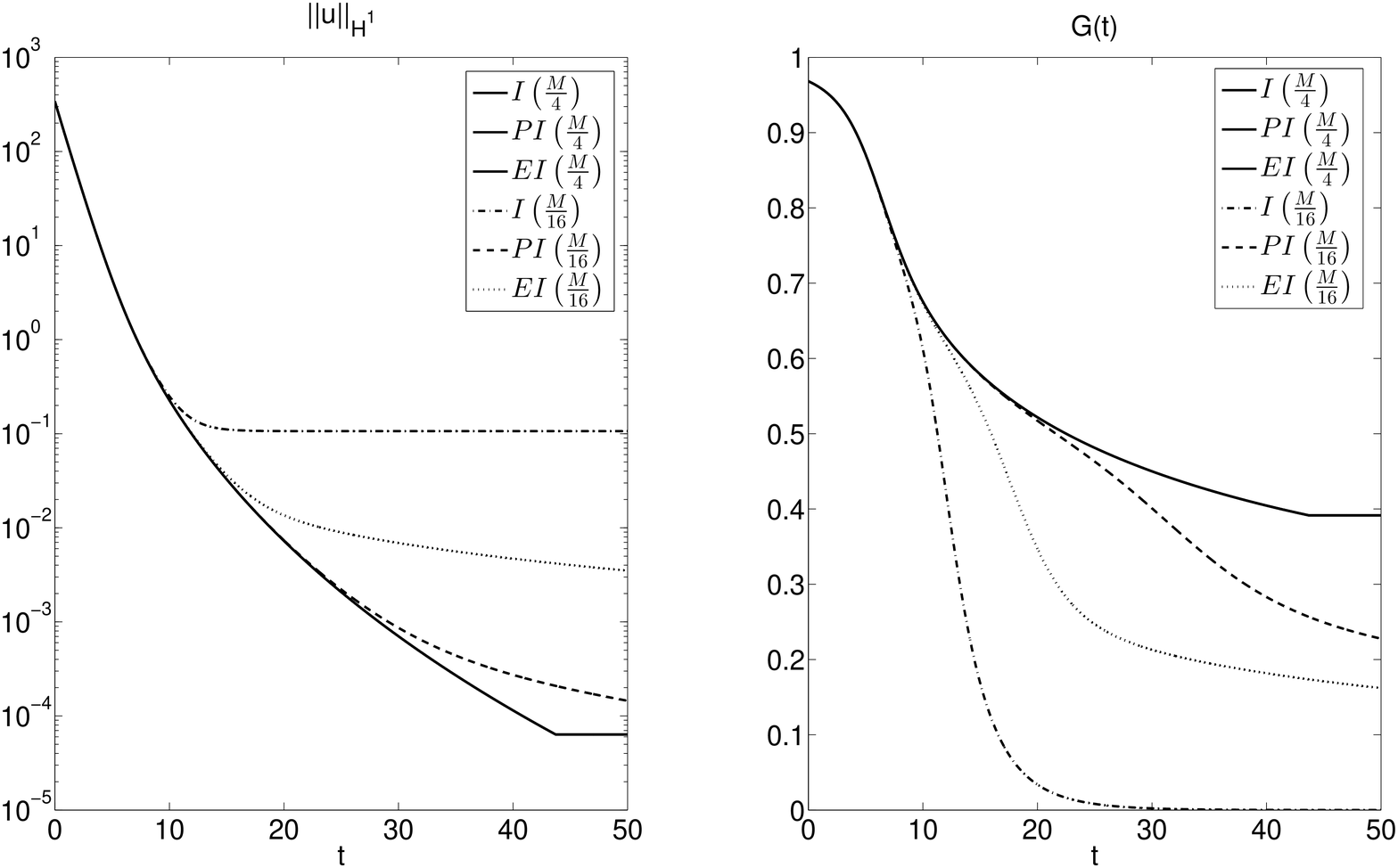} 
	\end{changemargin}
	\caption{Evolution of $\|u\|_{H^1}$ and $G(t)$ with respect to time for band limited type dampings, $I \left( \frac{M}{4} \right)$ (---), $PI \left( \frac{M}{4} \right)$ (---), $EI \left( \frac{M}{4} \right)$ (---), $I \left( \frac{M}{16} \right)$ ($\cdot-\cdot$), $PI \left( \frac{M}{16} \right)$ (- - -) ,$EI \left( \frac{M}{16} \right)$ ($\cdots$). Here the initial datum is the soliton.}
	\label{fig_4}
\end{figure}

The difference between these dampings is when $|k|>N$. For these frequencies, since $\gamma_k \neq 0$, the dampings with exponential or polynomial decay should damp more than the ideal one ($I(N)$). It is the case when $N=\frac{M}{16}$, moreover the damping with polynomial decay is more effective than the one with exponential decay. But we observe similar damping when $N=\frac{N}{4}$. We notice the damping depends on the band width. It seems rightful because the equation considered is dispersive. Indeed, for this kind of equation, high frequencies appear and these frequencies cannot be damped especially if the band width is too small.

Let us illustate this by considering now the gaussian as initial datum
\[ u_0(x) = e^{-\frac{x^2}{\sigma^2}}. \]
Here the standard deviation is fixed equal to $\sigma=\frac{\max(\textbf{k})}{10}$. Then, in order to see the effect of the damping on the low frequencies, we study the influence of the band limited damping size in Figure \ref{fig_5}.
We observe that the more $N$ increases, the more efficient is the damping. However, it seems that all dampings with $N \geq \frac{3\sigma}{2}$ behave similarly. Besides, for lower $N$, the different kind of dampings can be classified. We notice that for all $N$ and all dampings, the decreasing speed is similar at beginning and the difference appears after. The difference appears when the high frequencies does. 

\begin{figure}[H]
	\begin{changemargin}{-1.4cm}{0cm}
	\centering
	\includegraphics[scale=0.29]{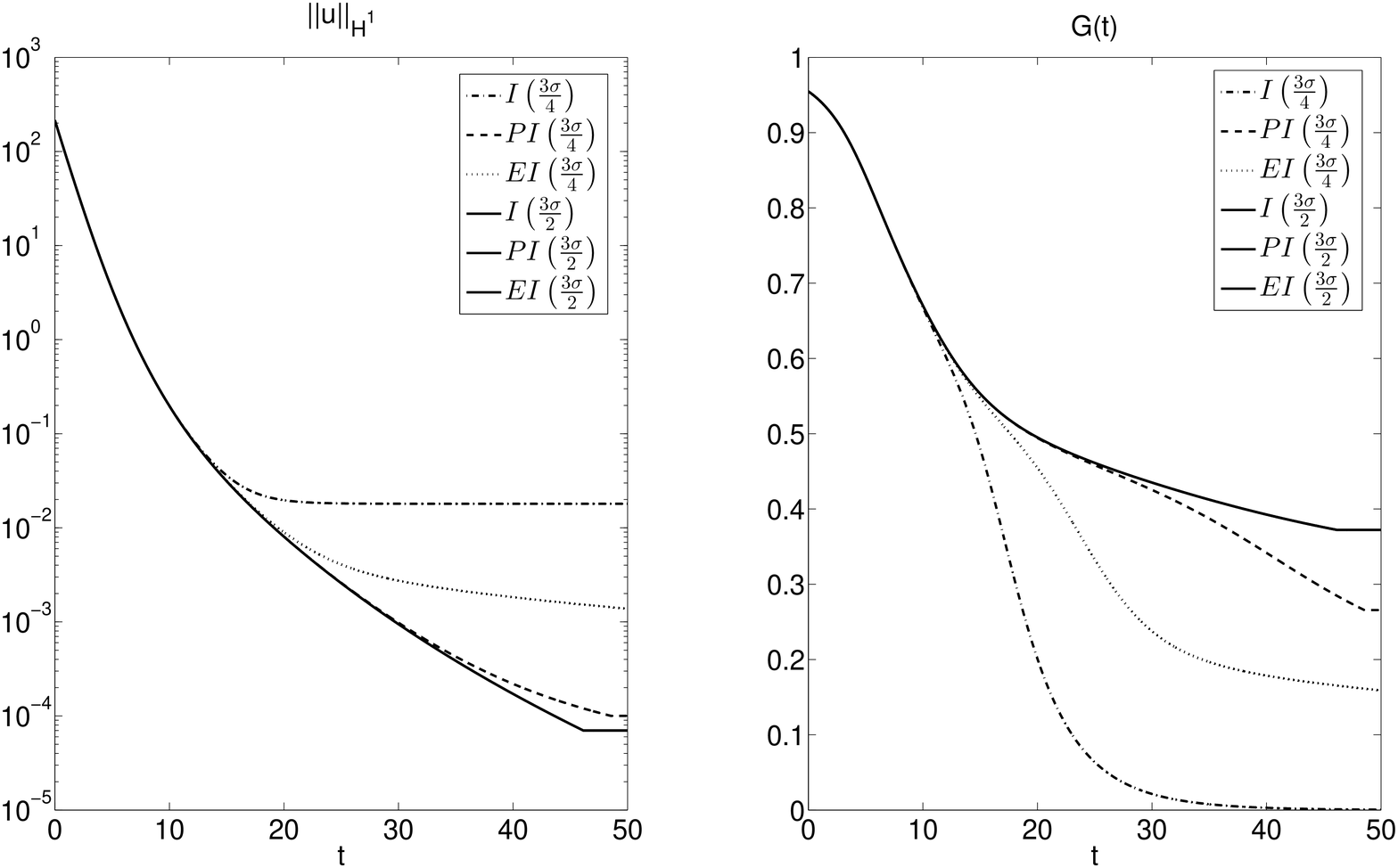} 
	\end{changemargin}
	\caption{Evolution of $\|u\|_{H^1}$ and $G(t)$ with respect to time for band limited type dampings, $I \left( \frac{3\sigma}{4} \right)$ ($\cdot-\cdot$), $PI \left( \frac{3\sigma}{4} \right)$ (- - -), $EI \left( \frac{3\sigma}{4} \right)$ ($\cdots$), $I \left( \frac{3\sigma}{2} \right)$ (---), $PI \left( \frac{3\sigma}{2} \right)$ (---) ,$EI \left( \frac{3\sigma}{2} \right)$ (---). Here the initial datum is the gaussian.}
	\label{fig_5}
\end{figure}


The influence of the domain is finally inspected. It is known that the eigenvalues of the laplacian and of the bilapalcian depends on the length of the domain $L$. Here we choose the initial datum as 
\[ u_0(x) = \sin \left( \frac{2\pi x}{L} \right). \] 
Figure \ref{fig_6} presents results with $L$ equal to $\pi$, $2\pi$ and $3\pi$.
We notice that  if $L < 2\pi$, the laplacian damping is less effective than the bilaplacian one. It is the contrary if $L > 2\pi$. But, the dampings are similar when $L=2\pi$. We also observe that for a fixed damping, the greater is $L$, the slower is the damping.

\begin{figure}[H]
	\begin{changemargin}{-1.4cm}{0cm}
	\centering
	\includegraphics[scale=0.29]{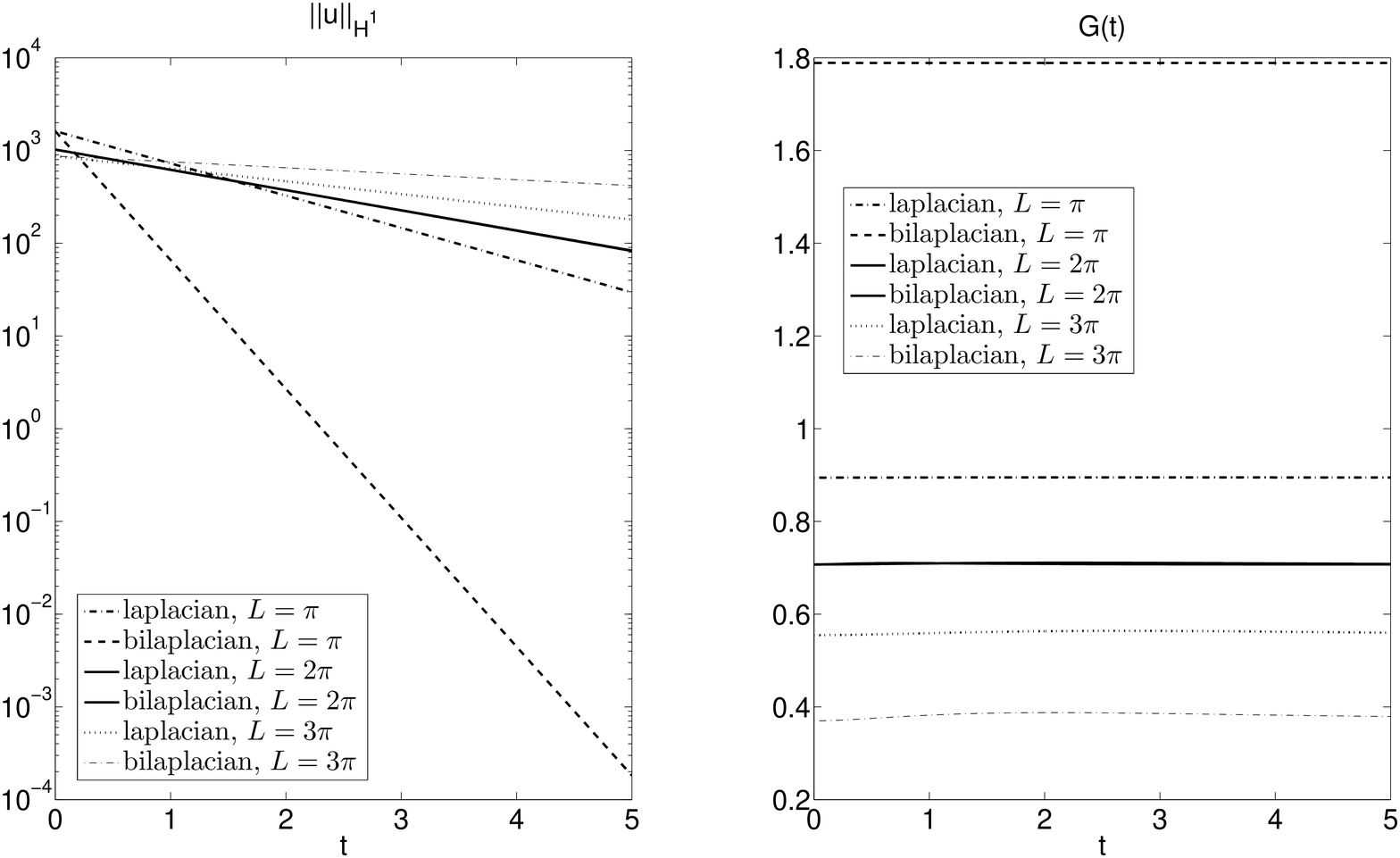} 
	\end{changemargin}
	\caption{Evolution of $\|u\|_{H^1}$ and $G(t)$ with respect to time for $L=\pi$ with $\gamma_k = k^2$ ($\bm{\cdot-\cdot}$) and $\gamma_k = k^4$ (- - -), $L=2\pi$ with $\gamma_k = k^2$ (---) and $\gamma_k = k^4$ (---), $L=3\pi$ with $\gamma_k = k^2$ ($\cdots$) and $\gamma_k = k^4$ ($\cdot-\cdot$). Here the initial datum is a sine.}
	\label{fig_6}
\end{figure}

\section{Final comments}
	The damping operator $\mathscr{L}_{\gamma}$ considered in this article allows to have a large variety of dampings. We first get back the standard dampings (for example $-\Delta u$). But we also have less common dampings like band-limited ones or weaker ones, e.g., such that $\lim\limits_{|k| \rightarrow +\infty} \gamma_k = 0$.
Moreover, with this frequential approach, we can adjust the dampings by frequency bands. This is interesting in order to build cheap and efficient dampings.
Using these dampings, the numerical observation shows a damping for energy norms like the $H^1$-norm. This is consistent with the results about the asymptotic behavior.

This way of damping with frequency filters is particulary of interest for its flexibility and its efficiency. It would be a good perspective to perform similar work for the control wave equations.
	

\end{document}